\newcommand{\be}{\begin{equation}}
\newcommand{\ee}{\end{equation}}
\newcommand{\beaa}{\begin{eqnarray*}}
\newcommand{\eeaa}{\end{eqnarray*}}
\newcommand{\bea}{\begin{eqnarray}}
\newcommand{\eea}{\end{eqnarray}}
\newcommand{\lbl}{\label}
\newcommand{\bSigma}{\mathbf{\Sigma}}
\newcommand{\hSigma}{\mathbf{\hat\Sigma}}
\newtheorem{theorem}{ \noindent T{\footnotesize HEOREM}}
\newtheorem{lemma}{ \noindent L{\footnotesize EMMA}}
\newtheorem{coro}{ \noindent C{\footnotesize OROLLARY}}
\begin{document}

\title{Joint limiting laws for high-dimensional independence tests\thanks{Both authors contributed equally to this paper. 
The authors thank Stephane Bonhomme, Zongming Ma, Richard Samworth, Jeff Yao, Cun-Hui Zhang, and seminar/conference participants at University of Chicago, Peking University, and 2015 CMStatistics for their helpful comments and suggestions. This work was initiated when Danning Li was a postdoc at University of Cambridge. Danning Li's research was supported by the Engineering and Physical Sciences Research Council Early Career Fellowship EP/J017213/1. Lingzhou Xue's research is supported by the American Mathematical Society Simons Travel Grant and the National Science Foundation grant DMS-1505256.}}
\author{Danning Li and Lingzhou Xue\\
Jilin University and Pennsylvania State University
}

\date{First Version: August 2015; This Version: December 2015}
\maketitle


\begin{abstract}


Testing independence is of significant interest in many important areas of large-scale inference. Using extreme-value form statistics to test against sparse alternatives and using quadratic form statistics to test against dense alternatives are two important testing procedures for high-dimensional independence. However, quadratic form statistics suffer from low power against sparse alternatives, and extreme-value form statistics suffer from low power against dense alternatives with small disturbances and may have size distortions due to its slow convergence. For real-world applications, it is important to derive powerful testing procedures against more general alternatives. Based on intermediate limiting distributions, we derive (model-free) joint limiting laws of extreme-value form and quadratic form statistics, and surprisingly, we prove that they are asymptotically independent. Given such asymptotic independencies, we propose (model-free) testing procedures to boost the power against general alternatives and also retain the correct asymptotic size. Under the high-dimensional setting, we derive the closed-form limiting null distributions, and obtain their explicit rates of uniform convergence. We prove their consistent statistical powers against general alternatives. We demonstrate the performance of our proposed test statistics in simulation studies. Our work provides very helpful insights to high-dimensional independence tests, and fills an important gap.
 \end{abstract}



\section{Introduction}

Inference of high-dimensional data is now becoming increasingly important in theoretical and applied research of statistics, econometrics, machine learning, and signal processing. Testing the covariance structure such as independence and bandedness has received growing attention in various areas of high-dimensional inference. High-dimensional independence plays a critical role in many important methods, which usually assume the normality of observations. \cite{benjamini-hochberg-1995} and \cite{storey-2002} effectively controlled the false discovery rate for independent test statistics in large-scale multiple testing. \cite{bickel-2004} and \cite{fan-fan-2008} advocated the independence rule in Fisher's linear discriminant analysis for high-dimensional classification. \cite{baltagi-etal-2012} and \cite{fan-etal-2015} also emphasized the importance to test cross-sectional independence.

Let {$\mathbf{x}_1, \ldots, \mathbf{x}_n $} be $n$ independent observations of the $p$-dimensional real-valued random vector ${\mathbf{x}=(x_1, \ldots, x_p)'}$. The focus of this paper is on addressing the following hypothesis testing problem of significant interest:
$$
\mathbf{H}_0: \ x_1, \ldots, x_p \ \text{are mutually independent,}
$$
under the ultra high-dimensional setting that dimension $p$ can be on a nearly exponential rate of sample size $n$. It is equivalent to test the diagonality of the covariance matrix $\mathbf{\Sigma}$ when we assume the normality of observations. Without loss of generality, we assume that $x_1, \ldots, x_p$ have zero means and unit variances.

The independence testing problem was extensively studied in the classical setting that $p$ is fixed and $n$ diverges to infinity. Many traditional methods include the likelihood ratio test (LRT), Roy's largest eigenvalue test, John's test, and Nagao's test. See \cite{muirhead1982} and \cite{anderson1984} for more details. Driven by rapid developments of data collection techniques, modern scientific and engineering applications pay more attention to the emerging setting that both $n$ and $p$ diverge to infinity. In this new setting, conventional testing procedures often perform quite poorly, and some of them may even be not well-defined. In the past fifteen years, statisticians and econometricians have made important advances, and proposed new testing procedures, which can be categorized into two different asymptotic regimes. In the first decade, investigations were very active in the high-dimensional setting that $p$ and $n$ are comparable (i.e., $p/n\rightarrow \rho \in(0,\infty)$). \cite{johnstone2001} revisited Roy's largest eigenvalue test and proved the Tracy--Widom law for its limiting distribution under null hypothesis. \cite{ledoit-wolf-2002} also extended John's test and Nagao's test to such setting. \cite{bai2009} and \cite{jiang2013} proposed the corrected LRTs and proved their asymptotic normality. Recently, the central focus in on the ultra high-dimensional setting that $p$ is much larger than $n$ (i.e., $p/n\rightarrow \infty$). Both the LRT and the corrected LRTs are not well defined now. To tackle this challenging asymptotic regime, two alternative hypotheses are mainly considered in the current literature:

\begin{itemize}
   \item Testing high-dimensional independence against dense alternatives, where $\mathbf{H}_0$ is violated by many small off-diagonal elements in $\mathbf{\Sigma}$. Under certain moment conditions, the seminal paper by \cite{chen-etal-2010} proposed two new testing procedures without explicitly specifying an relationship between $p$ and $n$. The statistics in \cite{chen-etal-2010} are modified from the following quadratic form statistic:
       \be
       {S_n=\sum_{1\leq i<j\leq p}\hat{\sigma}_{ij}^2},
       \ee
       the sum of squares of the off-diagonal entries of $\hat{\Sigma}$, where $\hSigma=(\hat{\sigma}_{ij})_{p\times p}$ is the sample covariance matrix. As shown in Lemma \ref{lawSn}, we can obtain the central limit theorem that $
        {b_n(nS_n-a_n)\to N(0,1)}
        $, where $a_n$ and $b_n$ are specified in Section 2.

   \item Testing high-dimensional independence against sparse alternatives, where $\mathbf{H}_0$ is violated by only a few large elements in $\mathbf{\Sigma}$. Under similar moment conditions, \cite{caijiang-2011} instead considered the following extreme-value form statistic:
       \be\label{mount}
       {L_n=\max_{1\leq i<j\leq p}|\hat{\sigma}_{ij}|},
       \ee
       the largest magnitude of the off-diagonal entries of $\hSigma$, which is also known as coherence of the random matrix in signal processing. \cite{caijiang-2011} extended \cite{jiang2004} to obtain that $P(nL_n^2\leq y+4 \log p-\log (\log p))\to e^{{-1}/{(\sqrt{8\pi})}e^{-{y}/2}}$, whose limiting distribution is an extreme distribution of type I (i.e., a Gumbel distribution).
\end{itemize}
Without requiring any model assumptions, we may also obtain the rank-based test statistics for testing high-dimensional independence against dense alternatives or against sparse alternatives. Remark that the second-order spectral test statistic in \cite{bao-etal-2015} and the nonparametric test statistic in \cite{zhou-2007} and \cite{han2014} correspond to the rank-based alternatives to $S_n$ and $L_n$ respectively. For notational convenience, we denote them by $T_n$ and $M_n$. Note that $T_n$ and $M_n$ obtain the similar limiting distributions as their counterparts.

However, it is well-known that quadratic form statistics suffer from low power against sparse alternatives, and extreme-value form statistics suffer from low power against dense alternatives with small disturbances. We illustrate such undesirable numerical performances for $S_n$ and $L_n$, $T_n$ and $M_n$ in our simulation studies in Section 5. In real-world applications, it would be very important and also appealing to derive a powerful testing procedure against more general alternatives if possible. But it even brings us more difficulty that extreme-value form statistics often suffer from size distortions due to their slow convergence, as pointed out in \cite{hall-1979}.

In this work, we shall address these challenging issues from a completely different perspective to existing methods. We first study the joint limiting law of $S_n$ and $L_n$ in Theorems 1--2 and the model-free joint limiting law of $T_n$ and $M_n$ in Theorems 3--4. Surprisingly, we show that $S_n$ and $L_n$ are indeed asymptotically independent, and so are $T_n$ and $M_n$. To the best of our knowledge, our work is the first in the statistics and econometrics literatures to derive the asymptotic independence of the quadratic form statistics and the extreme-value form statistics in high-dimensional inference. With the aid of intermediate limiting distributions \citep{liulinshao-2008}, both joint limiting laws are derived under uniform convergence with closed-form limiting null distributions, and their explicit rates of uniform convergence are also obtained. Given asymptotic independencies, we propose (model-free) high-dimensional independence testing procedures, i.e., $TS^1_n$ based on $S_n$ and $L_n$ and $TS^2_n$ based on $T_n$ and $M_n$, to boost the power against general alternatives and also retain the correct asymptotic size. Similar results can be obtained for testing bandedness of the covariance matrix. We derive their asymptotic sizes from the convolution of their joint limiting laws, and prove the consistent power of $TS^1_n$ against more general alternatives. Our work provides very helpful insights to high-dimensional independence tests, and fills an important gap in the current literature.


The rest of this paper is organized as follows. Sections 2 and 3 derive the joint limiting laws with explicit limiting distributions and rates of uniform convergence. Section 4 introduces our proposed test statistics, and establishes the theoretical fondations for their sizes and powers against more general alternatives. Section 5 demonstrates the numerical performance of proposed testing procedures. Section 6 includes key technical proofs, and Section 7 has a few concluding remarks. More technical details are rendered in the technical report version of this paper \citep{lixue-2015}.

\section{Joint Limiting Law}
\lbl{jointlaw1}

Section 2 proves the joint limiting law of two test statistics $S_n$ and $L_n$ under the high-dimensional setting that $p$ is either comparable to $n$ or much larger than $n$. Before proceeding, we first introduce some necessary notation. Let $\mathcal{X}=(\mathbf{x}_1,\cdots,\mathbf{x}_n)'=(x_{ij})_{n\times p}$ be the observed $n\times p$ random matrix with $p\ge n$. We assume that $\{x_{ij}:\, 1 \leq i\leq n, 1\le j\le p\}$, all entries of $\mathcal{X}$, are $n\times p$ independently and identically distributed random variables with zero mean and unit variance, i.e., $Ex_{ij}=0$ and $ E(x_{ij}^2)=1$. Then, each entry of the sample covariance matrix $\hSigma=(\hat{\sigma}_{ij})_{p\times p}$ has the form of
\bea\lbl{corr}
\hat{\sigma}_{ij}=\frac{X_{i}^{T}X_{j}}{n},\ \ 1\leq i, j\leq p,
\eea
where $X_i$ and $X_j$ denote the $i$-th column and the $j$-th column of $\mathcal{X}$ respectively. To simplify notation, we will use $c$ or $C$ to denote constants that do not depend on $n$ or $p$.

We define the marginal distribution functions of $S_n$ and $L_n$ as
$$P_{S_n}(z)=P\big(b_n(nS_n-a_n)\leq z\big),$$
where $a_n$ and $b_n$ are specified in Lemma \ref{lawSn}, and
$$P_{L_n}(y)=P\big(n L_{n}^2 - 4\log p + \log\log p \le y \big).$$
Moreover, we introduce their joint distribution function as
$$
P_{{S_n,L_n}}(z,y)=P\big(\{b_n(nS_n-a_n)\leq z\}\cap \{n L_{n}^2 - 4\log p + \log\log p \ge y\} \big).
$$
Note that $P_{{S_n,L_n}}(z,y)$ computes the probability of $\{L_{n}^2 - 4\log p + \log\log p \ge y\}$ instead of $\{L_{n}^2 - 4\log p + \log\log p \le y\}$. $P_{{S_n,L_n}}(z,y)$ is equivalent to the canonical joint distribution function, but it simplifies technical analysis and results in the explicit limiting distribution. In what follows, we first study the limiting behaviours of marginal distribution functions $P_{S_n}(z)$ and $P_{L_n}(y)$, and then explore their joint limiting law $P_{{S_n,L_n}}(z,y)$.

To derive limiting laws, we assume the following moment assumption for $\mathcal{X}$:
\begin{itemize}
  \item [(C1)] There exists some fixed constant $t_0>0$ such that $E[e^{t_0x_{1j}^{2}}]< \infty$  for all $1\leq j \leq p$.
\end{itemize}
Condition (C1) is a standard assumption for $\mathcal{X}$, and it has been used in \cite{caijiang-2011}, \cite{xuezou-2012}, among many other references.

\cite{chen-etal-2010} and \cite{caijiang-2011} have investigated the asymptotic distributions of $S_n$ and $L_n$ respectively. Especially, $S_n$ asymptotically converges to a normal distribution, while $L_n$ converges to a Gumbel distribution. Now we revisit the asymptotic distributions of $S_n$ and $L_n$, and derive their explicit uniform rates of convergence in Lemmas \ref{lawSn} and \ref{lawLn}.
Lemmas \ref{lawSn} and \ref{lawLn} extend \cite{chenshao-2007} and \cite{liulinshao-2008} from the high-dimensional setting, where $p$ is on the polynomial order of $n$, to the ultra high-dimensional setting respectively, where $p$ can be on the nearly exponential order of $n$.

\begin{lemma}\label{lawSn} Assume that $\{x_{ij};\, 1 \leq i\leq n\}$ are i.i.d. random variables with $Ex_{1j}=0$, $ E(x_{1j}^2)=1$, $Ex_{1j}^4>1$ and $E |x_{1j}|^{8}< \infty$.       Then,
\begin{eqnarray}\lbl{benz}
 \sup_z |P( b_n(nS_n-a_n)\leq z )-\Phi(z)| \leq \left\{ \begin{array}{ll}
        C \sqrt{n/p} & \mbox{if $p \gg n^{5/3}$};\\
      C     p^{-1/5}& \mbox{if $p =O(n^{5/3})$},\end{array} \right.
 \end{eqnarray}
where $\Phi(z)$ is the standard normal distribution function. Here,
$$a_n={p\choose 2}$$
and
$$ b_n= \left\{ \begin{array}{ll}
          \sqrt{n}/\left({(p-1)\sqrt{\sum_{i=1}^p(Ex_{1i}^4-1)}  }\right) & \mbox{if $p \gg n^{5/3}$}\\
        n/({p\choose 2}\sqrt{ V_n }) & \mbox{if $p =O(n^{5/3})$}\end{array} \right.
$$
where
$$
V_n=\frac{ 4(n^2-n)}{p(p-1)}+\frac{4n\sum_{1\leq i<j\leq p}(Ex_{1i}^4-1)(Ex_{1j}^4-1)}{p^2(p-1)^2}+\frac{4n\sum_{i=1}^p(Ex_{1i}^4-1)}{p^2}.
$$
\end{lemma}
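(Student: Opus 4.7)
The plan is to express $nS_n-a_n$ as a martingale in the row filtration of $\mathcal{X}$ and then apply a martingale Berry--Esseen bound. Set $\mathcal{F}_k=\sigma(x_{lj}:1\le l\le k,\,1\le j\le p)$ and $W_{ij}^{(k)}=\sum_{l=1}^{k}x_{li}x_{lj}$, so that $E[W_{ij}^2\mid\mathcal{F}_k]=(W_{ij}^{(k)})^2+(n-k)$. Writing $nS_n=n^{-1}\sum_{i<j}W_{ij}^2$ and using $(W^{(k)}_{ij})^2-(W^{(k-1)}_{ij})^2=2W^{(k-1)}_{ij}x_{ki}x_{kj}+x_{ki}^2x_{kj}^2$, the martingale differences $D_k:=E[nS_n\mid\mathcal{F}_k]-E[nS_n\mid\mathcal{F}_{k-1}]$ split cleanly as $D_k=D_k^{(1)}+D_k^{(2)}$ with
\[
D_k^{(1)}=\tfrac{2}{n}\sum_{i<j}W^{(k-1)}_{ij}x_{ki}x_{kj},\qquad D_k^{(2)}=\tfrac{1}{n}\sum_{i<j}(x_{ki}^2x_{kj}^2-1).
\]
The first piece is a degree-two chaos in row $k$ with weights fixed by $\mathcal{F}_{k-1}$ (odd and mean zero conditional on $\mathcal{F}_{k-1}$); the second depends only on row $k$ and is even in it; hence the two are orthogonal within each step and across steps.

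I would then verify that $s_n^2:=\sum_k\Var(D_k)$ matches the stated $V_n$ after the $b_n^2$ rescaling. Using $E[(W^{(k-1)}_{ij})^2]=k-1$ gives $\sum_k E[(D_k^{(1)})^2]=\tfrac{2(n-1)}{n}\binom{p}{2}$, which accounts for the $4(n^2-n)/[p(p-1)]$ term of $V_n$; the variance of $D_k^{(2)}$, evaluated by splitting pairs $(\{i,j\},\{i',j'\})$ into disjoint, one-overlapping, and coinciding cases, reproduces the remaining two terms (the cross-product $(Ex_{1i}^4-1)(Ex_{1j}^4-1)$ term from coinciding pairs and the diagonal $\sum_i(Ex_{1i}^4-1)$ term from one-overlapping pairs). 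A martingale Berry--Esseen bound in the style of Heyde--Brown (and as used by \cite{chenshao-2007}) then yields a uniform bound of order
\[
\Bigl(s_n^{-4}\textstyle\sum_k E[D_k^4]+s_n^{-4}\,\Var\bigl(\sum_k E[D_k^2\mid\mathcal{F}_{k-1}]\bigr)\Bigr)^{1/5}.
\]
Under the eighth-moment hypothesis, a Rosenthal-type inequality applied to the degree-two form in $D_k^{(1)}$ (with $\mathcal{F}_{k-1}$-measurable weights) gives $E[(D_k^{(1)})^4\mid\mathcal{F}_{k-1}]\lesssim n^{-4}(\sum_{i<j}(W^{(k-1)}_{ij})^2)^2$, which integrates against moment bounds on $W^{(k-1)}_{ij}$; a direct fourth-moment bound on $Z_k=\sum_{i<j}(x_{ki}^2x_{kj}^2-1)$ handles $D_k^{(2)}$; the conditional-variance fluctuation is bounded by the same bookkeeping at one order higher.

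The two stated rates arise by identifying which piece dominates $s_n^2$. When $p=O(n^{5/3})$ the cross-row piece $\sum_k D_k^{(1)}$ controls both $s_n^2$ and the fourth-moment budget, and balancing the two summands in the Berry--Esseen bound evaluates to $p^{-1/5}$. When $p\gg n^{5/3}$ the diagonal i.i.d.\ piece $\sum_k D_k^{(2)}$ dominates $s_n^2$, so it is sharper to apply the classical Berry--Esseen for i.i.d.\ sums to $\sum_k D_k^{(2)}$ and to treat $\sum_k D_k^{(1)}$ as a perturbation; its standard deviation relative to $s_n$ is $O(\sqrt{n/p})$, which yields the stated rate. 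The main obstacle will be the sharp, uniform-in-$p$ moment bound on the degree-two chaos in $D_k^{(1)}$ together with the controlled splicing of the two regimes across the transition $p\asymp n^{5/3}$; this is what distinguishes the result from the polynomial-order statements of \cite{chenshao-2007} and \cite{liulinshao-2008}.
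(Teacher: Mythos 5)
Your decomposition is genuinely different from the paper's: you build the martingale over the $n$ \emph{rows} (observations), whereas the paper organizes everything around the $p$ independent \emph{columns} --- a Chen--Shao nonlinear-statistic projection onto $X_1,\dots,X_p$ for $p\gg n^{5/3}$, and a martingale in the column filtration $\mathcal{F}_k=\sigma\{X_1,\dots,X_k\}$, $k=1,\dots,p$, with increments $M_k=\frac{2}{p(p-1)}\sum_{i<k}\{(X_k'X_i)^2-X_k'X_k-X_i'X_i+n\}+\frac{2}{p}(X_k'X_k-n)$, for the other regime. Your variance bookkeeping is correct (one small caveat: $D_k^{(1)}$ and $D_k^{(2)}$ are not conditionally orthogonal given $\mathcal{F}_{k-1}$ unless third moments vanish --- the coinciding-pair term contributes $Ex_{1i}^3Ex_{1j}^3W^{(k-1)}_{ij}$ --- though unconditional orthogonality survives because $EW^{(k-1)}_{ij}=0$). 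The real problem is quantitative: the row filtration has only $n$ increments, and this caps what the Heyde--Brown/Haeusler bound \citep{haeusler-1988} can deliver. Your own regime identification is also off: $\sum_k E[(D_k^{(1)})^2]\asymp p^2$ while $\Var(\sum_k D_k^{(2)})\asymp p^3/n$, so the crossover in $s_n^2$ occurs at $p\asymp n$, not $p\asymp n^{5/3}$. In the band $n\ll p=O(n^{5/3})$ the piece $\sum_kD_k^{(2)}$ is a normalized sum of $n$ i.i.d.\ variables, for which $s_n^{-4}\sum_kE[(D_k^{(2)})^4]\asymp n^{-1}$, so the fourth-moment martingale bound gives at best $n^{-1/5}$; since the paper assumes $p\ge n$, this is \emph{weaker} than the claimed $p^{-1/5}$. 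The paper reaches $p^{-1/5}$ precisely because its martingale runs over $p$ columns of comparable size, making the Lyapunov ratio $\sum_kEM_k^4/\Var^2(Z_n)\lesssim 1/p$. No rebalancing of your two row-pieces closes this: the alternative route (classical Berry--Esseen for $\sum_kD_k^{(2)}$ plus a perturbation) gives $n^{-1/2}+(\text{perturbation})$, and $\sqrt{n/p}\ge p^{-1/5}$ throughout $p\le n^{5/3}$.

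There is a second, smaller gap in the regime $p\gg n^{5/3}$. Treating $\sum_kD_k^{(1)}$ as a perturbation with standard deviation $O(\sqrt{n/p})\cdot s_n$ and smoothing with Chebyshev plus the anticoncentration of $\Phi$ yields a Kolmogorov-distance contribution of order $(n/p)^{1/3}$, not $(n/p)^{1/2}$. Upgrading a variance ratio to a same-order Kolmogorov bound is exactly what the concentration-inequality approach of \cite{chenshao-2007} is for, and the paper invokes Theorem 3.2 there (with the Hájek projection $g(X_i)=\frac{1}{n}\sum_k(x_{ki}^2-1)$ onto the columns) to get the linear rate $C\sqrt{n/p}$. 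You cite Chen--Shao in passing but describe it as a martingale Berry--Esseen bound and never actually deploy the concentration step, so as written your first-regime rate is also short of the claim. The fix for both issues is essentially to transpose your construction: condition on columns rather than rows, so that the i.i.d.-across-rows structure is absorbed into each increment and the number of increments becomes $p$.
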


\begin{lemma}\label{lawLn}
Suppose Condition (C) and also that  $\log p=o(n^{1/3})$. Then, we have
\begin{eqnarray}
\sup_{y}\Big| P\left(n L_{n}^2 - 4\log p + \log\log p \le y\right)-F(y)\Big|\leq C\sqrt{\frac{(\log p)^3}{n}}
\end{eqnarray}
where $F(y)=\exp\Big(-\frac{p^2-p}{2}P(\chi^2(1)\geq 4\log p - \log\log p + y)\Big)$.
\end{lemma}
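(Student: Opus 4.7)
Set $y_n = 4\log p - \log\log p + y$, $p_n = P(\chi^2(1)\ge y_n)$, and $\lambda_n = \binom{p}{2} p_n$, so that $F(y) = e^{-\lambda_n}$ and the event $\{nL_n^2\le y_n\}$ equals $\{W=0\}$, where $W = \sum_{i<j}I_{ij}$ and $I_{ij} = \mathbf{1}\{n\hat\sigma_{ij}^2 > y_n\}$. The plan is to combine a Chen--Stein Poisson approximation for $P(W=0)$ with a Cram\'er-type moderate deviation for $\sqrt{n}\,\hat\sigma_{ij}$, in the spirit of the intermediate-distribution approach of Liu--Lin--Shao, and to keep careful track of all explicit error terms. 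The supremum over $y$ is handled by first restricting to a moderate regime (roughly $|y|\le C\log\log p$) in which $\lambda_n$ is bounded away from $0$ and $\infty$; in the tails $|y|\to\infty$ both $P(W=0)$ and $e^{-\lambda_n}$ are exponentially close to the common limits $0$ or $1$, and the target bound is trivial.

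In the moderate regime, the crucial structural point is that under $\mathbf{H}_0$ the columns $X_1,\dots,X_p$ of $\mathcal{X}$ are independent, so $I_{ij}$ and $I_{kl}$ are independent whenever $\{i,j\}\cap\{k,l\}=\emptyset$. Taking the dependence neighbourhood $B_{(i,j)} = \{(k,l)\colon k<l,\ \{k,l\}\cap\{i,j\}\ne\emptyset\}$ of cardinality $O(p)$, the Chen--Stein bound yields
\[
\bigl|P(W=0) - e^{-EW}\bigr| \le b_1 + b_2,
\]
with $b_1 = \sum_\alpha\sum_{\beta\in B_\alpha} P(I_\alpha=1)P(I_\beta=1)$ and $b_2 = \sum_\alpha\sum_{\beta\in B_\alpha\setminus\{\alpha\}}P(I_\alpha=1,I_\beta=1)$. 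Since $\lambda_n = O(1)$ forces $p_n = O(p^{-2})$, one obtains $b_1 = O(p^3 p_n^2) = O(1/p)$. For $b_2$, when $\alpha=(i,j)$ and $\beta=(i,k)$ share a single index I would condition on $X_i$: given $X_i$ the two factors are independent, and each is bounded by $Cp_n$ on the high-probability event $\|X_i\|^2/n\in[1/2,2]$ (whose complement is exponentially small under (C1)), so $b_2 = O(1/p)$ as well.

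It then remains to compare $EW = \binom{p}{2} P(n\hat\sigma_{ij}^2 > y_n)$ with $\lambda_n$, and this is the main obstacle. Under (C1) the centred products $x_{ki}x_{kj}$ are sub-exponential, so a Cram\'er-type moderate deviation for $\sqrt{n}\,\hat\sigma_{ij} = n^{-1/2}\sum_k x_{ki}x_{kj}$ gives
\[
\frac{P(\sqrt{n}\,|\hat\sigma_{ij}|>t)}{2(1-\Phi(t))} = 1 + O\!\left(\frac{t^3}{\sqrt{n}}\right)\qquad\text{uniformly for }0\le t = o(n^{1/6}).
\]
Applied at $t = \sqrt{y_n} = O(\sqrt{\log p})$, which is permitted by $\log p = o(n^{1/3})$, this yields $|P(n\hat\sigma_{ij}^2 > y_n) - p_n| \le C p_n (\log p)^{3/2}/\sqrt{n}$, so $|EW - \lambda_n| = O(\lambda_n (\log p)^{3/2}/\sqrt{n})$. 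Combined with $|e^{-EW}-e^{-\lambda_n}|\le |EW-\lambda_n|$ and the $O(1/p)$ Chen--Stein error, the total error is $O(\sqrt{(\log p)^3/n})$ as claimed. The hardest ingredient is this moderate deviation: one must push Cram\'er's expansion all the way to $t\asymp\sqrt{\log p}$ and retain the explicit rate $t^3/\sqrt{n}$ (not merely asymptotic equivalence) uniformly in $(i,j)$, and it is here that condition (C1) is used essentially.
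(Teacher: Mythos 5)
Your proposal follows essentially the same route as the paper's proof: a Chen--Stein Poisson approximation over the index-sharing neighbourhoods $B_{(i,j)}$ (the paper's Lemma \ref{stein}, giving the $O(p^{-1+\epsilon})$ term with $b_3=0$) combined with a Cram\'er/Sakhanenko-type moderate deviation for $\sqrt{n}\,\hat\sigma_{ij}$ after truncating $x_{ki}x_{kj}$ at level $C_0\log p$ (the paper's Lemma \ref{shao}), which is exactly where the $C\sqrt{(\log p)^3/n}$ rate and Condition (C1) enter. The one correction: your ``moderate regime'' $|y|\le C\log\log p$ is too narrow --- for $\log\log p \ll y \lesssim \log p$ the crude tail bound only gives $|P(W=0)-e^{-\lambda_n}| = O((\log p)^{-c})$, which does not beat $\sqrt{(\log p)^3/n}$, so the Chen--Stein plus moderate-deviation analysis must be carried out on the full range $-2\log\log p^{\theta} < y < 2\log p$ as the paper does (your argument extends verbatim, so this is an imprecision rather than a gap).
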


\textbf{Remark 1}: $L_n$ is the extreme-value type statistic, and it would suffer from size distortions due to their slow convergence to the Gumbel distribution $e^{{-1}/{(\sqrt{8\pi})}e^{-{y}/2}}$. This phenomenon is similar to the slow convergence of normal extremes (Hall, 1979). To address this issue, we follow the idea of \cite{liulinshao-2008} to study the convergence of $L_n$ to its intermediate limiting distribution $F(y)$, whose final limiting distribution is $e^{{-1}/{(\sqrt{8\pi})}e^{-{y}/2}}$. With the aid of $F(y)$, Lemma \ref{lawLn} obtains the explicit rate of uniform convergence in high dimensions. Numerical performances of the intermediate limiting distribution are demonstrated in Section 5.



\smallskip
Given Lemmas \ref{lawSn} and \ref{lawLn}, we are ready to prove the surprising asymptotic independence of $S_n$ and $L_n$ in Theorem \ref{birthday}. Moreover, Theorem \ref{birthday} also establishes the explicit rate of convergence for this asymptotic result.

\begin{theorem}\lbl{birthday}  Suppose Condition (C) and also that $n,p\to \infty$ and $\log p=o(n^{1/6})$. Then,
\be
\sup_{y, z}\Big|P_{{S_n,L_n}}(z,y)-P_{S_n}(z)(1-P_{L_n}(y))\Big|\leq C\cdot \min(p^{-1/5}, \sqrt{n/p}).
\ee
\end{theorem}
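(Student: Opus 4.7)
Denote $A_S=\{b_n(nS_n-a_n)\le z\}$, $B=\{nL_n^2-4\log p+\log\log p\ge y\}$, and $B_{ij}=\{n\hat{\sigma}_{ij}^2\ge 4\log p-\log\log p+y\}$, so that $B=\bigcup_{i<j}B_{ij}$. From the identity
$$P_{S_n,L_n}(z,y)-P_{S_n}(z)(1-P_{L_n}(y))=P(A_S\cap B)-P(A_S)P(B),$$
the theorem reduces to $\sup_{y,z}|P(A_S\cap B)-P(A_S)P(B)|\le C r_n$ with $r_n:=\min(p^{-1/5},\sqrt{n/p})$. Setting $\lambda_n(y):=\binom{p}{2}P(\chi^2(1)\ge 4\log p-\log\log p+y)$, Lemma \ref{lawLn} gives $P(B)=1-e^{-\lambda_n(y)}+O(r_n)$; outside a bounded range of $y$, $\lambda_n(y)$ either vanishes (so $P(B)\le Cr_n$) or diverges (so $P(B^c)\le Cr_n$), and the claim collapses to Lemma \ref{lawSn}. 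We therefore restrict attention to a range on which $\lambda_n(y)$ is bounded.

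The plan is a Bonferroni expansion coupled with a conditional Berry--Esseen bound. At truncation depth $2m$,
$$\Bigl|P(A_S\cap B)-\sum_{k=1}^{2m}(-1)^{k-1}H_k(A_S)\Bigr|\le H_{2m+1}(A_S),\quad H_k(E):=\sum_{\{(i_l,j_l)\}_{l=1}^{k}}P\Bigl(E\cap\bigcap_{l=1}^{k}B_{i_l j_l}\Bigr),$$
and the same inequality holds for $P(B)$ using $H_k(\Omega)$. Since $H_k(\Omega)\le C\lambda_n(y)^k/k!$ is summable in $k$, the Bonferroni tails are $o(r_n)$ once $m\to\infty$ slowly. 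Matching the two truncated series then reduces the theorem to establishing, uniformly over configurations of $k\le 2m$ distinct pairs, the conditional Berry--Esseen bound
$$\sup_{z}\Bigl|P\bigl(A_S\,\big|\,\bigcap_{l=1}^{k}B_{i_l j_l}\bigr)-\Phi(z)\Bigr|\le C r_n,$$
which, combined with $P(A_S)=\Phi(z)+O(r_n)$ from Lemma \ref{lawSn}, yields $H_k(A_S)=P(A_S)H_k(\Omega)+O(r_n H_k(\Omega))$ and, after summation with signs, the desired bound.

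To prove the conditional bound, fix a configuration with column set $\mathcal{C}$ and decompose $S_n=S_n'+R_n$, where $S_n'$ is the quadratic form over pairs built from the $p-|\mathcal{C}|$ free columns with natural centering $a_n'=\binom{p-|\mathcal{C}|}{2}$. Because $S_n'$ is measurable with respect to the free columns, it is independent of $\bigcap_l B_{i_l j_l}$, and Lemma \ref{lawSn} applied directly to $S_n'$ gives $\sup_z|P(b_n(nS_n'-a_n')\le z\mid\bigcap_l B_{i_l j_l})-\Phi(z)|\le C r_n$. The residue $R_n$ splits into (i) $k$ configured squares with conditional mean of order $4k\log p$, (ii) $O(|\mathcal{C}|^2)$ non-configured inner pairs within $\mathcal{C}$, and (iii) $O(|\mathcal{C}|\,p)$ cross pairs between $\mathcal{C}$ and free columns; a conditional moment calculation delivers $b_n|nR_n-(a_n-a_n')|=o_P(r_n)$ under $\log p=o(n^{1/6})$. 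The main obstacle is item (iii): conditioning on $\bigcap_l B_{i_l j_l}$ leaves each marginal $X_{i_l}$ close to its unconditional law but breaks independence within $\mathcal{C}$, so one must ensure that the $|\mathcal{C}|(p-|\mathcal{C}|)$ cross inner products $\hat{\sigma}_{i_l,m}$ remain jointly small on the $1/b_n$ scale. The key observation, that $B_{i_l j_l}$ is measurable with respect to $(X_{i_l},X_{j_l})$ alone while each free column $X_m$ is independent of all conditioned columns, shows that conditionally on $X_{i_l}$ each $\hat{\sigma}_{i_l,m}$ is a centered linear combination of i.i.d.\ variables with conditional variance $|X_{i_l}|^2/n^2\sim 1/n$; the cross sum therefore has aggregate variance $O(|\mathcal{C}|p)$, and Markov's inequality delivers the required $o(r_n)$ rate uniformly in $k$ up to the Bonferroni depth.
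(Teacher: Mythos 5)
Your overall architecture coincides with the paper's: split the range of $y$ into a central region and two extreme regions handled by crude bounds, run a Bonferroni (inclusion--exclusion) expansion on the event $\{L_n^2>\cdot\}$ truncated at depth $d=O(\log p)$, decouple $S_n$ into the quadratic form over the columns not touched by the conditioning pairs (to which Lemma \ref{lawSn} applies) plus a residual over the index pairs meeting the configured set $W_I$, and control the alternating tails via a bound of the form $\sum P(\cap_t\{V_{l_t}\ge y_n\})\le (e^{-y/2+3}/d)^d$ (the paper's Lemma \ref{xia3}, which you assert as $H_k(\Omega)\le C\lambda^k/k!$ without proof --- it is nontrivial because the pairs overlap, but it is at least the right target).

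The genuine gap is in your treatment of the cross terms (item (iii)): a second-moment/Markov bound cannot deliver what is needed. Whether you phrase the argument conditionally or unconditionally, the bad event $\{b_n|\sum_{(i_l,j_l)\in W_I}(V_l-1)|\ge \Upsilon_n\}$ must be controlled after either dividing by $P(\cap_l B_{i_lj_l})\asymp p^{-2k+\epsilon}$ or, equivalently, summing over the $\binom{q}{k}\asymp p^{2k}$ configurations appearing at level $k$ of the Bonferroni expansion. A Chebyshev bound is at best polynomially small in $n$ and $p$ and is annihilated by this $p^{2k}$ factor once $k$ reaches the required depth $d\asymp\log p$ (and, in the ultra high-dimensional regime $\log p=o(n^{1/6})$, $p$ can be nearly exponential in $n$, so even for $k=1$ a rate like $d/n$ need not be $O(\sqrt{n/p})$). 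The paper instead proves the exponential bound $P\big(b_n|\sum_{W_I}(V_l-1)|\ge\Upsilon_n\big)\le Ce^{-cn^{1/3}}$ via truncation of $(X_i'X_m)^2/n$ at level $h_n=n^{1/3}$, restriction to the event $T_n^i$ controlling $\sum_k x_{ki}^2/n$, $\sum_k x_{ki}^4/n$ and $\max_k|x_{ki}|$, and Bernstein's inequality conditionally on $X_i$; the hypothesis $\log p=o(n^{1/6})$ is exactly what makes $p^{2d}e^{-cn^{1/3}}\to0$. Relatedly, your claim that conditioning on $B_{i_lj_l}$ leaves $X_{i_l}$ ``close to its unconditional law'' is the delicate point the truncation event $T_n^i$ is designed to handle, and your restriction to a ``bounded range of $y$'' is too narrow: to make $P(B)$ or $P(B^c)$ themselves of order $r_n$ you must let the central region extend to $-2\log\log p^{\theta}<y<2\log p$, and the extreme regions are then controlled by the elementary inequality $|P(A\cap B)-P(A)P(B)|\le\min(P(A),P(B),P(A^c),P(B^c))$ together with Poisson approximation, not by Lemma \ref{lawSn} alone.
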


It is a simple but important observation that $S_n$ is the {sum} of dependent variables $\{\hat \sigma_{ij};\, 1 \leq i<j\leq n\}$, and $L_n$ is the {maximum} of these dependent variables. We now give some intuition about the asymptotic independence in the following two remarks.

\smallskip
\textbf{Remark 2}: Theorem 1 shares the similar philosophy with some classical results. To be more specific, let $u_1,\ldots,u_n$ be independently identically distributed random variables. It is well-known that the sum $\sum_{i=1}^n u_i$ and the maximum $\max_{i=1,\ldots,n} u_i$ are asymptotically independent \citep{chow1978}. The seminal papers by \cite{hsing-1995} and \cite{hsing-1996} proved the asymptotic independence of $\sum_{i=1}^n u_i$ and $\max_{i=1,\ldots,n} u_i$ without requiring independent and identical distributions. More specifically, $\sum_{i=1}^n u_i$ and $\max_{i=1,\ldots,n} u_i$ are asymptotically independent when $u_1,\ldots,u_n$ are strongly mixing stationary random variables or stationary normal random variables.

\smallskip
\textbf{Remark 3}:  However, results in \cite{hsing-1995} and \cite{hsing-1996} did not directly apply to the asymptotic independence of $S_n$ and $L_n$ in high-dimensional independence tests. $S_n$ and $L_n$ are U statistics, and their dependencies among summations or maximum do not fall into the general local dependence category. They are nonlinear random variables \citep{chenshao-2007}. To the best of our knowledge, our work is the first in the statistics and econometrics literatures to obtain this surprising asymptotic independence.


\smallskip

Combining Lemmas \ref{lawSn}--\ref{lawLn} and Theorem \ref{birthday}, we obtain the intermediate joint limiting distribution of $S_n$ and $L_n$ with the explicit rate of convergence in the following theorem.

\begin{theorem}\lbl{prasing}  Under the same conditions of Theorem \ref{birthday}, we have
\be
   \sup_{y, z}\Big|P_{{S_n,L_n}}(z,y) -\Phi(z) (1-F(y))\Big|\leq C\max\Big( \min(p^{-1/5}, \sqrt{n/p}), \sqrt{\frac{(\log p)^3}{n}}\Big).
\ee
 \end{theorem}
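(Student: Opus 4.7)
The plan is to deduce Theorem \ref{prasing} as a direct bookkeeping corollary of Lemma \ref{lawSn}, Lemma \ref{lawLn}, and Theorem \ref{birthday} via a single triangle-inequality decomposition. No new probabilistic analysis is needed; the theorem simply transfers the two marginal approximations ($P_{S_n}\to\Phi$ and $P_{L_n}\to F$) through the approximate-independence identity $P_{S_n,L_n}(z,y)\approx P_{S_n}(z)(1-P_{L_n}(y))$ provided by Theorem \ref{birthday}.

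Concretely, I would insert two intermediate quantities and write
\[ P_{S_n,L_n}(z,y) - \Phi(z)(1-F(y)) = I_1 + I_2 + I_3,\]
with
\[ I_1 = P_{S_n,L_n}(z,y) - P_{S_n}(z)(1-P_{L_n}(y)),\]
\[ I_2 = (P_{S_n}(z)-\Phi(z))(1-P_{L_n}(y)),\]
\[ I_3 = \Phi(z)\bigl(P_{L_n}(y)-F(y)\bigr),\]
and then bound each term uniformly in $(z,y)$. For $I_1$, Theorem \ref{birthday} gives $\sup_{z,y}|I_1|\leq C\min(p^{-1/5},\sqrt{n/p})$. For $I_2$, since $1-P_{L_n}(y)\in[0,1]$, Lemma \ref{lawSn} yields $\sup_{z,y}|I_2|\leq \sup_z|P_{S_n}(z)-\Phi(z)|\leq C\min(p^{-1/5},\sqrt{n/p})$, after noting that the two-regime bound in Lemma \ref{lawSn} unifies as $\min(p^{-1/5},\sqrt{n/p})$ because $\sqrt{n/p}\leq p^{-1/5}$ iff $p\geq n^{5/3}$. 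For $I_3$, since $\Phi(z)\in[0,1]$, Lemma \ref{lawLn} yields $\sup_{z,y}|I_3|\leq C\sqrt{(\log p)^3/n}$. Summing the three bounds via triangle inequality and absorbing the constant factor into $C$ gives the claimed rate, written as a maximum.

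The condition $\log p = o(n^{1/6})$ inherited from Theorem \ref{birthday} implies the weaker condition $\log p = o(n^{1/3})$ needed in Lemma \ref{lawLn}, so there are no compatibility issues with the hypotheses. There really is no hard step here; if there is any subtlety at all, it is the paper's convention of formulating $P_{S_n,L_n}$ with the event $\{nL_n^2 - 4\log p + \log\log p \geq y\}$ (noted just before Lemma \ref{lawSn}). This is precisely what makes the bounding factors $1-P_{L_n}(y)$ and $\Phi(z)$ lie in $[0,1]$, so they drop out of the estimates cleanly and no extra uniform-boundedness argument is required.
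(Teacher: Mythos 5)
Your proof is correct and is exactly the paper's argument: the paper's own proof of this theorem is a one-line remark that it follows directly by combining Theorem \ref{birthday} with Lemmas \ref{lawSn}--\ref{lawLn}, which is precisely the triangle-inequality decomposition you spell out (including the observation that the two-regime bound of Lemma \ref{lawSn} unifies as $\min(p^{-1/5},\sqrt{n/p})$ and that the factors $1-P_{L_n}(y)$ and $\Phi(z)$ lie in $[0,1]$). One trivial quibble: for the identity $I_1+I_2+I_3$ to sum to $P_{S_n,L_n}(z,y)-\Phi(z)(1-F(y))$, the third term should be $I_3=\Phi(z)\bigl(F(y)-P_{L_n}(y)\bigr)$ rather than its negative; this sign does not affect the absolute-value bound or the conclusion.
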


A direct application of Theorem \ref{prasing} results in the final joint limiting law of $S_n$ and $L_n$.

\begin{coro}\label{prasing2}
Under the same conditions of Theorem \ref{birthday}, as $n,p\rightarrow \infty$, we have
$$
\sup_{y, z}\Big|P_{{S_n,L_n}}(z,y)- \Phi(z) \left(1-e^{\frac{-1}{\sqrt{8\pi}}e^{\frac{-y}2}}\right)\Big|\longrightarrow 0.
$$
\end{coro}

As shown in Corollary \ref{prasing2}, the final joint limiting distribution of $S_n$ and $L_n$ leads to slow convergence. Following \cite{hall-1979} and \cite{liulinshao-2008}, we recommend the intermediate limiting distribution (i.e., convolution of distributions $\Phi(z)$ and $F(y)$) in the high-dimensional independence testing, instead of their final limiting laws.

\section{Model-Free Joint Limiting Law}
\lbl{jointlaw2}

In this section, we shall study the model-free joint limiting law of the quadratic type statistic and the extreme-value type statistic. Similarly as in Section \ref{jointlaw1}, we first establish their surprising asymptotic independence, and then derive their intermediate joint limiting distribution with the explicit rate of uniform convergence under the high-dimensional setting.

Assume that  $X_1, \ldots, X_p$ are the columns form $\mathcal{X}_{n\times p}$, and all the entries of $\mathcal{X}_{n\times p}$ are continuous and independent random variables. Let $F_{ki}$ denote the rank of $x_{ki}$ in $X_i$, and $N_{ki}=\sqrt{\frac{12}{n^2-1}} (F_{ki}-\frac{n+1}{2})$ be the normalized rank. Define $N_{i}=(N_{1i}, \ldots, N_{ni})'$. For any $1\leq i, j\leq p$, the Spearman rank correlation coefficient between $X_i$ and $X_j$ is
\bea\lbl{rcorr}
r_{ij}=\frac{1}{n}\sum_{k=1}^nN_{ki}N_{kj}
\eea
Define $\mathbf{R}=(r_{ij})_{p\times p}$ as the sample Spearman rank correlation matrix.

Now, we introduce the quadratic type statistic $T_n$ and the extreme-value type statistic $M_n$ as follows:
\be
T_n=\sum_{1\leq i< j\leq p} r_{ij}^2,
\ee
which is the sum of squares of the off-diagonal entries of $\mathbf{R}$, and
\be\lbl{mount1}
M_n=\max_{1\leq i <  j \leq p}|r_{ij}|,
\ee
which is the largest magnitude of the off-diagonal entries of $\mathbf{R}$. Next, we define the marginal distribution functions of $T_n$ and $M_n$ as
$$
P_{T_n}(z)=P\big(\beta_n(nT_n-\alpha_n)\leq z\big)
$$
where $\alpha_n$ and $\beta_n$ are specified in Lemma \ref{lawTn}, and
$$P_{M_n}(y)=P\big(n M_{n}^2 - 4\log p + \log\log p \le y \big).$$
Moreover, we introduce their joint distribution function as
$$
P_{{T_n,M_n}}(z,y)=P\big(\{\beta_n(nT_n-\alpha_n)\leq z\}\cap \{n M_{n}^2 - 4\log p + \log\log p \ge y\} \big).
$$
Remark that $P_{{T_n,M_n}}(z,y)$ computes the probability of $\{M_{n}^2 - 4\log p + \log\log p \ge y\}$ instead of $\{M_{n}^2 - 4\log p + \log\log p \le y\}$. Again, it facilitates derivation of the explicit joint limiting distribution. In the sequel, after studying the limiting behaviours of marginal distribution functions $P_{T_n}(z)$ and $P_{M_n}(y)$, we derive their joint limiting law $P_{{T_n,M_n}}(z,y)$. Especially, we do not impose any model assumption such as Condition (C) to obtain the limiting laws.

\cite{bao-etal-2015} has studied the asymptotic distribution of $T_n$, and \cite{zhou-2007} and \cite{han2014} worked on the asymptotic distribution of $M_n$. $T_n$ asymptotically converges to a normal distribution, while $M_n$ converges to the same Gumbel distribution as $L_n$. None of them derive the explicit convergence rate under the (ultra) high-dimensional setting. Now we investigate the model-free asymptotic distributions of $T_n$ and $M_n$ in Lemmas \ref{lawTn} and \ref{lawMn}. In particular, we obtain their explicit uniform rates of uniform convergence under the ultra high-dimensional setting.

\begin{lemma}\lbl{lawTn} Suppose that $n,p\to \infty$. Then, we have
\begin{eqnarray}\lbl{stupid1}
 \sup_t |P( \beta_n(nT_n-\alpha_n) \leq t )-\Phi(t)| \leq C(\min(n^2, p))^{-1/5}.
 \end{eqnarray}
Here, $\alpha_n={p \choose 2}(1+\frac{1}{n-1})$ and
$  \beta_n=n/({p\choose 2}\sqrt{V_n'}) ,$
 where    \begin{eqnarray*}
 V_n'=\frac{4n^2EN_{11}^2N_{21}^2+2n(EN_{11}^4-1)^2-4n}{p(p-1)} +O(\frac{1}{p^2}).
  \end{eqnarray*}
\end{lemma}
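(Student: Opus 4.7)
The plan is to exploit the column independence under $H_0$ to build a martingale-difference decomposition of $nT_n-\alpha_n$, and then apply a Berry-Esseen bound for nonlinear statistics in the spirit of \cite{chenshao-2007}. Under $H_0$ the columns $N_1,\ldots,N_p$ are i.i.d.\ random permutations of the fixed vector with entries $\sqrt{12/(n^2-1)}(k-(n+1)/2)$, $k=1,\ldots,n$, so $|N_{ki}|$ is uniformly bounded, $E[N_{ki}]=0$, $E[N_{ki}^2]=1$, and (from $\sum_k N_{ki}=0$) $E[N_{ki}N_{li}]=-1/(n-1)$ for $k\neq l$. A direct computation then gives $E[r_{ij}^2]=1/(n-1)$, and hence $E[nT_n]=\binom{p}{2}\cdot n/(n-1)=\alpha_n$, while a parallel bookkeeping of fourth-order mixed moments of the $N_{ki}$ pins down $\mathrm{Var}(nT_n)=\binom{p}{2}^2V_n'/n^2$, matching the normalization $\beta_n$.

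Next, I would set $\mathcal{F}_j=\sigma(N_1,\ldots,N_j)$ and introduce $D_j=E[nT_n\mid\mathcal{F}_j]-E[nT_n\mid\mathcal{F}_{j-1}]$. Because $r_{ij}$ depends only on $(N_i,N_j)$ and $E[r_{ij}^2\mid N_i]=1/(n-1)$ by the same moment calculation used for the unconditional mean, a short counting argument over the three cases (both indices $\leq j$, only one index $\leq j$, both indices $>j$) gives
$$D_j=n\sum_{i<j}r_{ij}^2-\frac{n(j-1)}{n-1},\qquad j=2,\ldots,p,$$
so that $nT_n-\alpha_n=\sum_{j=2}^p D_j$ is a sum of martingale differences. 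Conditional on $\mathcal{F}_{j-1}$, $D_j$ is a centered symmetric quadratic form in the independent random permutation $N_j$ whose matrix depends on $N_1,\ldots,N_{j-1}$.

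Finally, I would apply a Berry-Esseen bound for martingale-difference arrays along the lines of \cite{chenshao-2007}. The key quantities to control are the conditional variance deviation $\sum_j E[D_j^2\mid\mathcal{F}_{j-1}]-\sum_j E[D_j^2]$ and a Lyapunov-type term such as $\sum_j E[D_j^4]$, both of which can be evaluated in closed form by using exchangeability of the coordinates of $N_j$ together with the uniform bound $|N_{ki}|\leq\sqrt{3}$. Truncation at a well-chosen level, combined with these estimates, yields a bound of order $p^{-1/5}$ when $p\leq n^2$, while in the complementary regime $p>n^2$ the $O(1/n)$ corrections coming from the rank constraint $\sum_k N_{ki}=0$ produce a ceiling of order $n^{-2/5}=(n^2)^{-1/5}$; combining the two cases gives the stated $\min(n^2,p)^{-1/5}$ rate. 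The main obstacle I expect is the fourth-moment bookkeeping for the $D_j$: every joint expectation of products of $N_{ki}$'s splits into a leading i.i.d.-like term (mirroring the calculation used for Lemma \ref{lawSn}) plus correction terms of order $1/n$ coming from the rank constraint, and these must be tracked explicitly — not absorbed into a constant — in order to read off the sharp rate rather than a strictly weaker bound.
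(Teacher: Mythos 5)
Your proposal is correct and follows essentially the same route the paper takes for this family of results: a column-by-column martingale decomposition of the quadratic statistic (your $D_j$ is the exact rank analogue of the $M_k$ in the proof of Lemma \ref{lawSn}, and is even cleaner here since $E[r_{ij}^2\mid N_i]=1/(n-1)$ is deterministic by the identities $\sum_k N_{ki}=0$ and $\sum_k N_{ki}^2=n$), followed by a martingale CLT rate controlled by the conditional-variance deviation (power $2/5$) and the fourth-moment Lyapunov term (power $1/5$), which yield the $n^{-2/5}$ and $p^{-1/5}$ contributions respectively. The only quibble is the reference: the relevant Berry--Esseen bound for martingale arrays is Haeusler's, as used in the paper, rather than the Chen--Shao concentration approach, which the paper reserves for the regime $p\gg n^{5/3}$ in Lemma \ref{lawSn}.
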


\begin{lemma}\label{lawMn}
Suppose that $n,p\to \infty$ and $\log p=o(n^{1/3})$. Then, we have
\begin{eqnarray}
\sup_{y}\Big| P\left(n M_{n}^2 - 4\log p + \log\log p \le y\right)-F(y)\Big|\leq C\sqrt{\frac{(\log p)^3}{n}}
\end{eqnarray}
where $F(y)=\exp\Big(-\frac{p^2-p}{2}P(\chi^2(1)\geq 4\log p - \log\log p + y)\Big)$.
\end{lemma}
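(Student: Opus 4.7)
The plan is to mirror the proof strategy of Lemma~\ref{lawLn}, replacing the Cramér-type tail estimate for the sample covariances $\hat\sigma_{ij}$ by one for the Spearman rank correlations $r_{ij}$, and then running the same Chen--Stein Poisson approximation against an easier dependence graph. Since the joint distribution of the rank matrix $(N_{ki})$ under $H_0$ is free of the marginal laws of the continuous $x_{ij}$, we may assume, without loss of generality, that each $N_i=(N_{1i},\dots,N_{ni})'$ is a uniformly random permutation of the fixed vector $\sqrt{12/(n^2-1)}\bigl(k-(n+1)/2\bigr)_{k=1}^n$, and that $N_1,\dots,N_p$ are mutually independent.

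Set $\tau_p=4\log p - \log\log p + y$, $A_{ij}=\{nr_{ij}^2\ge \tau_p\}$, and $W=\sum_{1\le i<j\le p}\mathbf{1}_{A_{ij}}$, so that $P(nM_n^2 - 4\log p + \log\log p \le y) = P(W=0)$. The Chen--Stein bound for Poisson approximation gives
\begin{equation*}
\bigl|P(W=0)-e^{-\lambda_p}\bigr|\le b_1+b_2,\qquad \lambda_p=\binom{p}{2}P(A_{12}),
\end{equation*}
where $b_1$ aggregates $P(A_{ij})P(A_{k\ell})+P(A_{ij}\cap A_{k\ell})$ over pairs $\{i,j\},\{k,\ell\}$ sharing one index, and $b_2=0$ because disjoint pairs yield $\sigma(N_i,N_j)$- and $\sigma(N_k,N_\ell)$-measurable events, hence independent ones. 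Comparing $e^{-\lambda_p}$ with $F(y)$ is then an elementary calculation once $\lambda_p=(1+o(1))\tfrac{p^2-p}{2}P(\chi^2(1)\ge\tau_p)$ is proved.

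The crucial input is a Cramér-type moderate deviation for the single Spearman correlation: uniformly for $0\le t \le C\sqrt{\log p}$,
\begin{equation*}
P(\sqrt n\,|r_{12}|\ge t)=\bigl(1+O((1+t^2)^{3/2}/\sqrt n)\bigr)\cdot 2(1-\Phi(t)).
\end{equation*}
Conditional on $N_1$, $r_{12}=n^{-1}\sum_k N_{k1}N_{k2}$ is a linear rank statistic with bounded regression coefficients ($|N_{k1}|\le\sqrt 3$), conditional mean zero, and conditional variance $(n-1)^{-1}$. A combinatorial Cramér-type moderate-deviation theorem for linear rank statistics (in the spirit of Bolthausen's permutation CLT, refined via Stein's method of exchangeable pairs) delivers the displayed bound; integrating out $N_1$ preserves it because the coefficients are uniformly bounded. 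Taking $t^2=\tau_p\asymp\log p$ forces the error to be $O(\sqrt{(\log p)^3/n})$, which is precisely where the hypothesis $\log p=o(n^{1/3})$ is used. The neighborhood term $b_1$ is then controlled by conditioning on the shared rank vector: given $N_1$, the events $A_{12}$ and $A_{13}$ are conditionally independent, so the same moderate deviation gives $P(A_{12}\cap A_{13})\le C\,P(A_{12})^2$. Because $P(A_{12})\asymp p^{-2}$ and there are $O(p^3)$ triples, $b_1=O(p^{-1})$, absorbed by the target rate.

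The main obstacle is the combinatorial moderate deviation itself: the summands $N_{k1}N_{k2}$ are not independent across $k$ for fixed $N_1$, so the classical Cramér conjugate-measure argument is unavailable, and one has to push a permutation-statistic Edgeworth expansion (via Stein coupling or a tilted sampling scheme) out to deviation levels of order $\sqrt{\log p}$ with the correct uniform error. Once this is in hand, the rest of the argument is a near-verbatim adaptation of Lemma~\ref{lawLn}.
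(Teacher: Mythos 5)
Your proposal follows essentially the same route as the paper: reduce $P(nM_n^2-4\log p+\log\log p\le y)$ to $P(W=0)$ for a sum of dependent indicators, apply the Chen--Stein Poisson approximation (the paper's Lemma~\ref{stein}) with the neighborhood graph of index pairs sharing a coordinate, kill the mixing term by the independence of the columns $N_i$, and convert $\lambda_p$ into $\tfrac{p^2-p}{2}P(\chi^2(1)\ge\tau_p)$ via a Cram\'er-type moderate deviation for a single correlation at the level $t\asymp\sqrt{\log p}$, which is exactly where the error $\sqrt{(\log p)^3/n}$ and the condition $\log p=o(n^{1/3})$ enter. Your observation that, conditionally on $N_1$, the events $A_{12}$ and $A_{13}$ are independent (in fact $P(A_{12}\mid N_1)=P(A_{12})$ by the permutation symmetry of $N_2$) cleanly disposes of the $b_2$ term, and matches the paper's treatment.

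The one genuine gap is the step you yourself flag as ``the main obstacle'': the Cram\'er-type moderate deviation
$P(\sqrt n\,|r_{12}|\ge t)=\bigl(1+O((1+t)^{3}/\sqrt n)\bigr)\,2(1-\Phi(t))$ uniformly for $t\le C\sqrt{\log p}$. You correctly identify that the summands $N_{k1}N_{k2}$ are not independent across $k$ and that the classical conjugate-measure argument does not apply, but you leave the result unproved and only sketch possible strategies. This is the analytic heart of the lemma --- without it the identification of $\lambda_p$ with $\tfrac{p^2-p}{2}P(\chi^2(1)\ge\tau_p)$ at the stated rate does not follow --- so as written the proof is incomplete. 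The gap is, however, fillable by citation rather than by new analysis: $r_{12}$ is a simple linear rank statistic with bounded scores, and Cram\'er-type large deviations for (generalized) rank statistics in exactly this regime are available in the literature; the paper relies on such a result (Seoh, Ralescu and Puri, 1985, which appears in its bibliography and is invoked as ``the large deviation for generalized rank statistics'' in the proof of Theorem~\ref{birthdaycake}). A secondary, more minor omission: to get the supremum over all $y\in\RR$ you still need the three-region split ($y\ge 2\log p$, the middle range, and $y\le -2\log\log p^{\theta}$) used in the proof of Lemma~\ref{lawLn}, since the Chen--Stein bound and the comparison of $e^{-\lambda_p}$ with $F(y)$ degrade when $\lambda_p$ is very large or very small; you wave at this as ``an elementary calculation'' but should carry it out.
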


\textbf{Remark 4}: $M_n$ is the rank-based extreme-value type statistic, and it would also suffer from size distortions due to their slow convergence to the Gumbel distribution $e^{{-1}/{(\sqrt{8\pi})}e^{-{y}/2}}$. We again follow the idea of \cite{liulinshao-2008} to use its intermediate limiting distribution $F(y)$, and obtain the explicit rate of uniform convergence in high dimensions. Section 5 demonstrates the numerical performance of the intermediate limiting distribution.

\smallskip
Given Lemmas \ref{lawTn} and \ref{lawMn}, we are ready to prove the surprising asymptotic independence of $T_n$ and $M_n$ in Theorem \ref{birthdaycake}, and also obtain the explicit rate of uniform convergence.

 \begin{theorem}\lbl{birthdaycake}  Assume that $n,p\to \infty$ and $\log p=o(n^{1/6})$ as $n\to\infty.$ Then
\be
\sup_{y, z}\Big|P_{{T_n,M_n}}(z,y)-P_{T_n}(z) (1-P_{M_n}(y))\Big|\leq C  \min(n^2, p)^{-1/5}.
\ee
 \end{theorem}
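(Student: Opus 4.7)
The plan is to mirror the strategy behind Theorem~\ref{birthday} for the rank-based pair $(T_n, M_n)$, using Lemmas~\ref{lawTn} and~\ref{lawMn} as the marginal inputs. Writing $\tau_n := \sqrt{(y + 4\log p - \log\log p)/n}$, $A = \{\beta_n(nT_n - \alpha_n) \leq z\}$, and $B = \{M_n < \tau_n\}$, the complement identity gives
$$P_{T_n,M_n}(z,y) = P(A) - P(A \cap B) = P_{T_n}(z) - P(A \cap B),$$
so the theorem reduces to establishing $P(A \cap B) = P_{T_n}(z)\,P_{M_n}(y) + O(\min(n^2,p)^{-1/5})$ uniformly in $y, z$. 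Introduce the truncated sum $\widetilde T_n := \sum_{i<j} r_{ij}^2 \mathbf{1}\{|r_{ij}| < \tau_n\}$; on $B$ one has $T_n = \widetilde T_n$ exactly, so it suffices to analyze the pair $(\widetilde T_n, N_n)$, where $N_n := \sum_{i<j}\mathbf{1}\{|r_{ij}| \geq \tau_n\}$ and $B = \{N_n = 0\}$.

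The core step is a Poisson--Gaussian decoupling. A Chen--Stein argument of the same type driving Lemma~\ref{lawMn} shows that $N_n$ is approximately Poisson with intensity $\lambda_n := \tfrac{p^2-p}{2}\,P(\chi^2_1 \geq 4\log p - \log\log p + y)$, so that $P(N_n = 0) = F(y) + O(\sqrt{(\log p)^3/n})$. The remaining task is the conditional asymptotic independence
$$\bigl|P\bigl(\beta_n(n\widetilde T_n - \alpha_n) \leq z \,\bigm|\, N_n = 0\bigr) - P\bigl(\beta_n(n T_n - \alpha_n) \leq z\bigr)\bigr| \leq C\,\min(n^2,p)^{-1/5},$$
uniformly in $y, z$. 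I would compare the joint characteristic function $E[e^{it\widetilde T_n}\mathbf{1}\{N_n = 0\}]$ with its factorized version $E[e^{it\widetilde T_n}]\,P(N_n = 0)$, exploiting that truncation decouples the bulk of $\widetilde T_n$ from the exceedance events defining $N_n$. Invoking the Berry--Esseen bound from Lemma~\ref{lawTn} for $\widetilde T_n$ (which inherits the $T_n$ rate, since truncation removes only $O_P(\tau_n^2)$ variance) and Esseen's smoothing inequality then converts the characteristic-function bound into the required Kolmogorov bound; combining with the Poisson approximation for $N_n$, the slower of the two marginal rates governs the final error, which under $\log p = o(n^{1/6})$ is $\min(n^2,p)^{-1/5}$.

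The main obstacle is that the rank correlations $r_{ij}$ share the normalized rank columns $N_1,\ldots,N_p$, so they are not locally dependent and classical sum-vs-max independence results (Hsing, 1995; 1996) do not transfer. Quantifying the perturbation of the law of $\widetilde T_n$ under conditioning on events involving a handful of ``large'' pairs is delicate, because each column $N_i$ enters many $r_{ij}$'s: conditioning on $|r_{i_0 j_0}| \geq \tau_n$ reweights every other $r_{ij}$ sharing index $i_0$ or $j_0$. I would address this via a Hoeffding/H\'ajek decomposition of $T_n$ into its principal quadratic component plus a lower-order residual, show the residual contributes at a scale negligible compared to $\min(n^2,p)^{-1/5}$, and verify that for each fixed exceedance pair the conditional law of the principal term shifts by only $O(1/p)$, so that the cumulative perturbation matches the Poisson tail $1 - F(y)$. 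Achieving this uniformly in $y, z$ with the sharp Berry--Esseen rate for the truncated statistic is the technically most demanding component, and is where the rank-based proof departs most substantively from the Pearson-based proof of Theorem~\ref{birthday}.
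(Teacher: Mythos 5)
Your reduction to showing $P(A\cap B)=P_{T_n}(z)P_{M_n}(y)+O(\min(n^2,p)^{-1/5})$ is the right target, and you correctly identify the real obstacle (every column $N_i$ enters $p-1$ of the $r_{ij}$'s, so no local-dependence or Hsing-type result applies). But the central step of your argument --- comparing $E[e^{it\widetilde T_n}\mathbf{1}\{N_n=0\}]$ with $E[e^{it\widetilde T_n}]P(N_n=0)$ ``exploiting that truncation decouples the bulk of $\widetilde T_n$ from the exceedance events'' --- is asserted, not established, and the asserted mechanism is not correct: truncation does not decouple anything. The truncated sum $\widetilde T_n$ and the exceedance count $N_n$ are functions of exactly the same family $\{r_{ij}\}$, and on $\{N_n=0\}$ one has $\widetilde T_n=T_n$ identically, so the factorization of that expectation \emph{is} the asymptotic independence you are trying to prove. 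Your fallback (Hoeffding decomposition plus an ``$O(1/p)$ shift per exceedance pair, cumulating to the Poisson tail'') is a heuristic with two unfilled holes: you have no device for summing over exceedance configurations without double counting, and no uniform-in-$z$ anti-concentration estimate that turns a per-pair distributional shift into a Kolmogorov-distance bound.

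The paper supplies concrete machinery for precisely these two holes, and it is worth seeing how. First, it splits $y$ into three regions and disposes of the extreme ones by the trivial bound $|P(A\cap B)-P(A)P(B)|\leq\min(P(A),P(B),P(A^c),P(B^c))$. In the central region it writes the joint probability as $P(\cup_l B_l)$ with $B_l=\{\delta_l>y_n\}\cap A$ and applies the two-sided Bonferroni inequalities up to depth $d\asymp\log p$; the inclusion--exclusion tail is controlled by the combinatorial bound of Lemma \ref{xue2}, $\sum P(\cap_{t=1}^d\{\delta_{l_t}\geq y_n\})\leq(e^{-y/2+3}/d)^d$. Second, for each fixed $d$-tuple of exceeding pairs it splits $T_n$ into the sub-sum over pairs sharing a column index with the tuple (at most $2pd$ terms) and the complementary sub-sum, which is \emph{genuinely} independent of the tuple because distinct columns of $\mathcal{X}$ are independent. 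The shared-index sub-sum is killed by the exponential bound (\ref{aircrash}), proved via Bernstein's inequality and a large-deviation estimate for rank statistics, and the complementary sub-sum inherits the Berry--Esseen rate of Lemma \ref{lawTn}. If you want to pursue your characteristic-function route, you would still need analogues of both Lemma \ref{xue2} and claim (\ref{aircrash}); as written, your proposal does not contain a proof.
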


Here, $T_n$ is the {sum} of dependent variables $\{r_{ij};\, 1 \leq i<j\leq n\}$, and $M_n$ is the {maximum} of these dependent variables. Recall that $r_{ij}=\frac{1}{n}\sum_{k=1}^nN_{ki}N_{kj}$. Since $\{N_{ki};\, 1 \leq k\leq n\}$ are not independent, Theorem \ref{birthdaycake} encountered more technical difficult than Theorem \ref{birthday}. We ought to use different techniques such as martingales to prove the asymptotic independence.


\smallskip
\textbf{Remark 5}: Theorem \ref{birthdaycake} presents the model-free joint limiting law, and it does not require the moment condition (i.e., Condition (C)) in Theorem \ref{birthday}. However, with the aid of Condition (C), Theorem \ref{birthday} achieves an improved rate of uniform convergence.

\smallskip

Combining Lemmas \ref{lawTn}--\ref{lawMn} and Theorem \ref{birthdaycake}, we obtain the intermediate joint limiting distribution of $T_n$ and $M_n$ with the explicit rate of convergence in the following theorem.

 \begin{theorem}\lbl{rasing}  Under the same conditions of Theorem \ref{birthdaycake}, we have
   \begin{eqnarray*} &&\sup_{y, z}\Big|P_{{T_n,M_n}}(z,y) -\Phi(z) (1-F(y))\Big|\leq C\max\Big( \min(n^2, p)^{-1/5}, \sqrt{\frac{(\log p)^3}{n}}\Big),
 \end{eqnarray*}
 where $F(y)=\exp\big(-\exp(-(p^2-p)(1-\Phi(4\log p - \log\log p + y))\big)$.
 \end{theorem}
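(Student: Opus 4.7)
The plan is to deduce Theorem~\ref{rasing} from Theorem~\ref{birthdaycake} together with Lemmas~\ref{lawTn} and~\ref{lawMn} by a two-step triangle inequality that replaces the two marginal factors in $P_{T_n}(z)(1-P_{M_n}(y))$ one at a time. The key structural observation I plan to exploit is that all of the distribution functions involved take values in $[0,1]$, so the product decouples cleanly and no additional constants appear.

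First I would insert the intermediate quantity $P_{T_n}(z)(1-P_{M_n}(y))$ and write
$$\bigl|P_{T_n,M_n}(z,y) - \Phi(z)(1-F(y))\bigr| \le I_1(z,y) + I_2(z,y),$$
where $I_1(z,y) = \bigl|P_{T_n,M_n}(z,y) - P_{T_n}(z)(1-P_{M_n}(y))\bigr|$ and $I_2(z,y) = \bigl|P_{T_n}(z)(1-P_{M_n}(y)) - \Phi(z)(1-F(y))\bigr|$. Theorem~\ref{birthdaycake} gives $\sup_{y,z} I_1(z,y) \le C\min(n^2,p)^{-1/5}$ immediately, so all that remains is to bound $I_2$.

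For $I_2$, I would add and subtract $\Phi(z)(1-P_{M_n}(y))$ and apply the triangle inequality to obtain
$$I_2(z,y) \le \bigl|P_{T_n}(z) - \Phi(z)\bigr|\cdot\bigl(1-P_{M_n}(y)\bigr) + \Phi(z)\cdot\bigl|P_{M_n}(y) - F(y)\bigr|.$$
Using $0\le 1-P_{M_n}(y)\le 1$ and $0\le \Phi(z)\le 1$, the right side is at most $|P_{T_n}(z) - \Phi(z)| + |P_{M_n}(y) - F(y)|$. I would then invoke Lemma~\ref{lawTn} to bound the first summand uniformly in $z$ by $C\min(n^2,p)^{-1/5}$, and Lemma~\ref{lawMn} to bound the second uniformly in $y$ by $C\sqrt{(\log p)^3/n}$.

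Taking the supremum over $(y,z)$ and assembling the three contributions yields a bound of the form $C\bigl[2\min(n^2,p)^{-1/5} + \sqrt{(\log p)^3/n}\bigr]$, which is absorbed into a constant multiple of $\max\!\bigl(\min(n^2,p)^{-1/5},\sqrt{(\log p)^3/n}\bigr)$, as required. No serious obstacle appears in this argument: the three nontrivial estimates have already been established, and the hypothesis $\log p = o(n^{1/6})$ inherited from Theorem~\ref{birthdaycake} automatically covers the weaker condition $\log p = o(n^{1/3})$ demanded by Lemma~\ref{lawMn}. The only mild matter I would flag is cosmetic: the chi-square tail form for $F(y)$ in Lemma~\ref{lawMn} matches the form displayed in Theorem~\ref{rasing} via the identity $P(\chi^2(1)\ge t) = 2(1-\Phi(\sqrt{t}))$, so the two expressions for $F(y)$ are equivalent.
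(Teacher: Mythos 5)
Your proposal is correct and is exactly the argument the paper intends: the paper's proof of this theorem is a one-line statement that it follows directly by combining Theorem \ref{birthdaycake} with Lemmas \ref{lawTn}--\ref{lawMn}, and your two-step triangle inequality is the standard way to fill in that combination. Your closing remark about reconciling the two displayed forms of $F(y)$ via $P(\chi^2(1)\ge t)=2(1-\Phi(\sqrt{t}))$ is also the right observation (the version printed in the theorem statement appears to contain a typographical slip).
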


A direct application of Theorem \ref{rasing} results in the final joint limiting law of $T_n$ and $M_n$.

\begin{coro}\label{rasing2}
Under the same conditions of Theorem \ref{birthdaycake}, as $n,p\rightarrow 0$, we have
$$
\sup_{y, z}\Big|P_{{T_n,M_n}}(z,y)- \Phi(z) \left(1-e^{\frac{-1}{\sqrt{8\pi}}e^{\frac{-y}2}}\right)\Big|\longrightarrow 0.
$$
\end{coro}

As shown in Theorem \ref{rasing} and Corollary \ref{rasing2}, we recommend the intermediate limiting distribution (i.e., convolution of distributions $\Phi(z)$ and $F(y)$) in the model-free high-dimensional independence testing, as the final limiting distribution may suffers from size distortion due to its slow convergence \citep{hall-1979,liulinshao-2008}.

\section{High-Dimensional Independence Tests}

The joint limiting laws derived in Sections 2--3 have important statistical applications to test the covariance structure of a high dimensional random variable. In Section 4 we propose new high-dimensional independence tests based on joint limiting laws, addressing the null hypothesis $\mathbf{H}_0: x_1, \ldots, x_p$ {are mutually independent}. Following \cite{caijiang-2011}, our proposed statistics can be extended to test bandedness in high dimensions.

To begin with, we introduce two parameter spaces for the covariance matrix $\Sigma$:
 \begin{eqnarray*}
 \mathcal{G}_1&=&\left\{\bSigma=(\sigma_{ij})_{p\times p}: \ \sigma_{ij}=\sigma_{ji} \ \text{and} \ \max_{i<j}|\sigma_{ij}|>C\sqrt{\frac{\log p}{n}}\right\};\\
  \mathcal{G}_2&=&\left\{\bSigma=(\sigma_{ij})_{p\times p}: \ \sigma_{ij}=\sigma_{ji} \ \text{and} \ \sum_{i<j}\sigma^2_{ij}\gg C { p\log^2p} \right\}.
 \end{eqnarray*}
Note that $\mathcal{G}_1$ represents the sparse alternative where the covariance has a few relatively large off-diagonal entries, and $\mathcal{G}_2$ denotes the dense alternative where the covariance contains a lot of small nonzero off-diagonal entries. Existing methods employs either the sum-of-squares type statistics to test against the dense alternative or the extreme-value type statistics to test against the sparse alternative. Our work provides an innovative testing procedure to boost the power against more general alternatives. More specifically, we consider the following composite alternative hypothesis:
$$
\mathbf{H}_1: \bSigma\in \mathcal{G}_1\cup \mathcal{G}_2
$$

Motivated by asymptotic independencies in Theorems \ref{birthday} and \ref{birthdaycake} and joint limiting laws in Theorems \ref{prasing} and \ref{rasing}, we propose the following two new testing procedures:
$$
TS_n^1 =  I_{\{b_n(S_n-a_n)+(n L_{n}^2 - 4\log p + \log\log p)\geq c_\alpha\}}
$$
and
$$
TS_n^2 =  I_{\{\beta_n(T_n-\alpha_n)+(n M_{n}^2 - 4\log p + \log\log p)\geq c_\alpha\}}
$$
where the threshold $c_\alpha$ is defined as the $\alpha$ upper quantile of the convolution distribution $\Phi\star F$. Here, $TS_n^1$ is based on the sum of normalized test statistics $S_n$ and $L_n$, and $TS_n^2$ is based on the sum of normalized model-free test statistics $T_n$ and $M_n$. $TS_n^1=1$ or $TS_n^2=1$ leads to the rejection for high-dimensional independence test.

\smallskip
\textbf{Remark 6}: We may use the following empirical approximation method to effectively obtain the threshold $c_\alpha$, which also helps find the threshold for extreme-value form statistics. Theorems \ref{prasing} and \ref{rasing} provide the convergence rates for joint limiting laws to the intermediate limiting distribution, and Lemmas \ref{lawLn} and \ref{lawMn} give the convergence rates for $L_n$ and $M_n$. Notice that $O(\sqrt{(\log p)^3/n})$ could still lead to a slow convergence when $p$ is much large than $n$. In practice, we recommend using the empirical approximation technique to avoid potential size distortions and boost the numerical performance. Instead of simulating $F(y)$, we need to approximate $\lambda_n=\sum_{1\leq i<j \leq p}P(  |\hat{\sigma}_{ij}|>\sqrt{4\log p-\log\log p+y} )$ as in Lemma 5. Since $ \hat{\sigma}_{ij}$ only depend on independent pair $X_i$ and $X_j$, we can approximate the distribution  of  $P(|\hat{\sigma}_{ij}|>\sqrt{4\log p-\log\log p+y})$ using empirical processes. Now we take $L_n$ and $TS_n^1$ for example. We generate $M$ independent pairs $(X_1^{i},X_2^{i})$ for $i=1,\ldots,M$, where $X_1^{i}$ and $X_2^{i}$ are independent. Next, we can get the empirical estimation of $P(|\hat{\sigma}_{ij}|>\sqrt{4\log p-\log\log p+y})$ based on $(X_1^{1},X_2^{1}),\cdots,(X_1^{M},X_2^{M})$. By the Dvoretzky--Kiefer--Wolfowitz inequality, we have better approximation of $\lambda_n$ as long as $M\gg p^2$ holds. In this way, we may obtain the $\alpha$ upper quantile for $L_n$ or $TS^1_n$.

\smallskip
Let $P_{\mathbf{H}_0}(\cdot)$ be the probability given the null hypothesis $\mathbf{H}_0$, and $P_{\mathbf{H}_1}(\cdot)$ be the probability given the alternative hypothesis $\mathbf{H}_1$. $P_{\mathbf{H}_0}(TS_n^1=1)$ denotes the probability of erroneously rejecting $\mathbf{H}_0$, and $P_{\mathbf{H}_1}(TS_n^1=1)$ is the probability of correctly rejecting $\mathbf{H}_0$. In the sequel, we shall prove that $TS_n^1$ and $TS_n^2$ not only well control the significance level but also achieve the consistent power, namely, $P_{\mathbf{H}_0}(TS_n^1=1)\rightarrow \alpha$ and $P_{\mathbf{H}_1}(TS_n^1=1)\rightarrow 1$, or , $P_{\mathbf{H}_0}(TS_n^2=1)\rightarrow \alpha$ and $P_{\mathbf{H}_1}(TS_n^2=1)\rightarrow 1$

As shown in the following theorem, both test statistics $TS_n^1$ and $TS_n^2$ achieve the asymptotic significance level $\alpha$.

\begin{theorem}\lbl{tb0} Under the same conditions of Theorem \ref{prasing}, we have
$$
P_{\mathbf{H}_0}(TS_n^1=1)\rightarrow \alpha
\quad \text{as} \ n\rightarrow \infty.
$$
Without requiring Condition (C), under the same conditions of Theorem \ref{rasing}, we then have
$$
P_{\mathbf{H}_0}(TS_n^2=1)\rightarrow \alpha
\quad \text{as} \ n\rightarrow \infty.
$$
\end{theorem}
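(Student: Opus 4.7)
The strategy is to read off the size directly from the joint limiting laws proved in Theorems \ref{prasing} and \ref{rasing}. Write $A_n = b_n(nS_n - a_n)$ and $B_n = nL_n^2 - 4\log p + \log\log p$, so that $TS_n^1 = I\{A_n + B_n \geq c_\alpha\}$, and define $A_n', B_n'$ analogously for the second statistic. Because $c_\alpha$ is, by construction, the upper $\alpha$-quantile of the convolution $\Phi \star F$, it suffices to show that $A_n + B_n$ (respectively $A_n' + B_n'$) converges in distribution to $Z + V$, where $Z \sim \Phi$, $V \sim F$, and $Z$ and $V$ are independent; the conclusion $P_{\mathbf{H}_0}(TS_n^1=1)\to\alpha$ is then immediate.

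The first step is to upgrade Theorem \ref{prasing} to joint convergence in distribution in the standard bivariate-CDF sense. The theorem controls the one-sided tail $P(A_n \leq z, B_n \geq y)$; complementing the second event gives
\begin{equation*}
P(A_n \leq z, B_n \leq y) = P(A_n \leq z) - P(A_n \leq z, B_n > y).
\end{equation*}
Since the intermediate limit $F$ is continuous in $y$, the discrepancy between $P(B_n > y)$ and $P(B_n \geq y)$ is asymptotically negligible uniformly in $y$ (combine Lemma \ref{lawLn} with the continuity of $F$). Pairing this with Lemma \ref{lawSn} for the $A_n$ marginal yields
\begin{equation*}
\sup_{z,y}\bigl|P(A_n \leq z, B_n \leq y) - \Phi(z)\,F(y)\bigr| \longrightarrow 0,
\end{equation*}
which is precisely joint weak convergence $(A_n, B_n) \Rightarrow (Z, V)$ with independent coordinates.

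The second step is the continuous mapping theorem applied to the map $(a,b)\mapsto a+b$: it delivers $A_n + B_n \Rightarrow Z + V$, whose law is the convolution $\Phi \star F$. Since $\Phi$ is continuous, $\Phi \star F$ is continuous, and P\'olya's theorem upgrades weak convergence to uniform convergence of CDFs. In particular, evaluating at the continuity point $c_\alpha$,
\begin{equation*}
P_{\mathbf{H}_0}(TS_n^1 = 1) = P(A_n + B_n \geq c_\alpha) \longrightarrow P(Z + V \geq c_\alpha) = \alpha.
\end{equation*}
The second display of the theorem is proved by exactly the same three-line argument, substituting Theorem \ref{rasing} and Lemma \ref{lawTn} for Theorem \ref{prasing} and Lemma \ref{lawSn}; in particular, Condition (C) is not invoked anywhere, consistent with the model-free hypothesis.

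The only mildly technical item is the passage from the ``$B_n \geq y$'' form recorded in Theorems \ref{prasing} and \ref{rasing} to the standard bivariate CDF form, but this is routine once the continuity of $F$ and the uniformity of the rates are combined. All of the substantive probabilistic work has been absorbed into the joint limiting laws of Sections 2--3; what remains here is a standard weak-convergence calculation with no further sharp estimates required.
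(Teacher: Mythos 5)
Your proposal is correct and follows exactly the route the paper intends: its proof of this theorem is the one-line remark that the result is ``a direct result by using Theorem \ref{prasing} and Theorem \ref{rasing}'', and you have simply filled in the routine weak-convergence details (complementing the one-sided event, passing to the sum, evaluating at $c_\alpha$). The only point worth flagging is that $F$ here is the intermediate distribution and depends on $n$ through $p$, so the ``continuous mapping plus P\'olya'' step should really be phrased as a uniform approximation of the CDF of $A_n+B_n$ by the moving target $\Phi\star F$ (which works because $\Phi$ is Lipschitz and the rates in Theorem \ref{prasing} are uniform); this is cosmetic and does not affect the validity of the argument.
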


To derive the power, we assume that $\mathcal{X}=(\mathbf{x}_1,\cdots,\mathbf{x}_n)'$ are $n$ independent observations of the random vector ${\mathbf{x}=(x_1, \ldots, x_p)'}$ with zero means, unit variances and $\bSigma\in \mathcal{G}_1\cup \mathcal{G}_2$. Notice that $\bSigma$ is no longer a diagonal matrix, and instead, $\bSigma$ either has a few large entries or has many small entries. In the following theorem, we prove the consistent power of our proposed testing procedure $TS_n^1$ under the composite alternative hypothesis. We may need more technical analysis to prove the consistent power of $TS_n^2$, which is beyond the scope of this paper for space consideration.

\begin{theorem}\lbl{tb1} Suppose that  $\bSigma\in \mathcal{G}_1\cup \mathcal{G}_2$, and we also assume that
 \begin{align}
    & \operatorname{E} [\,x_{1i_1}x_{1i_2}\dots x_{1i_{2k}}\,] = \sum\prod \operatorname{E}[\,x_{1i}x_{1j}\,], \\
    & \operatorname{E}[\,x_{1i_1}x_{1i_2}\dots x_{1i_{2k-1}}\,] = 0,
  \end{align}
  where $ \sum\prod$ is the sum over all distinct ways of partitioning $x_{1i_1},\ldots , x_{1i_{2k}}$ into pairs and $k\leq 4$.  As ${p=O(n^3)}$, $n\to \infty$, $p\to \infty$, $\textrm{tr}(\Sigma^8)/\text{tr}^2(\Sigma^4)\to 0$ and $\max_{i}\sum_{j=1}^p\sigma_{ij}^2/\text{tr}(\Sigma^2)\to 0.$ Then, $TS_n^1$ has the consistent power that
 $$
{\inf_{\Sigma\in \mathcal{G}_1\cup \mathcal{G}_2 } P (TS_n^1=1)\to 1 }
\quad \text{as} \ n\rightarrow \infty.
 $$
 \end{theorem}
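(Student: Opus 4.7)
The plan is to split the composite alternative $\mathcal{G}_1 \cup \mathcal{G}_2$ into its two constituent subfamilies and argue, in each case, that exactly one of the two standardized statistics comprising $TS_n^1$ is forced to diverge to $+\infty$ in probability, while the other is controlled from below by a quantity that is either $O_p(1)$ or is dominated by the diverging term. Since $c_\alpha$ is a fixed quantile of the convolution distribution $\Phi \star F$ and therefore $O(1)$, it then follows that $P(TS_n^1 = 1)\to 1$ uniformly over $\mathcal{G}_1 \cup \mathcal{G}_2$.

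\emph{Sparse case $\bSigma \in \mathcal{G}_1$.} Pick indices $(i^*, j^*)$ realizing $|\sigma_{i^*j^*}| > C\sqrt{\log p / n}$. Using the Wick-type moment identities, $E[\hat\sigma_{i^*j^*}] = \sigma_{i^*j^*}$ and $\Var(\hat\sigma_{i^*j^*}) = (1+\sigma_{i^*j^*}^2)/n = O(1/n)$, so a Chebyshev (or sharper Bernstein-type) argument gives $|\hat\sigma_{i^*j^*} - \sigma_{i^*j^*}| = o_p(\sqrt{\log p/n})$. Hence $nL_n^2 \ge n\hat\sigma_{i^*j^*}^2 \ge (C + o_p(1))^2 \log p$, and choosing $C > 2$ in the definition of $\mathcal{G}_1$ yields $nL_n^2 - 4\log p + \log\log p \to +\infty$ in probability. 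For the $S_n$-component, noting $E[nS_n] - a_n = (n+1)\sum_{i<j}\sigma_{ij}^2 \ge 0$ and that the trace conditions imply $b_n^2\,\Var(nS_n) = O(1)$ (see next paragraph), a one-sided Chebyshev bound gives $b_n(nS_n - a_n) \ge -M$ with arbitrarily high probability, so the full statistic exceeds $c_\alpha$ asymptotically.

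\emph{Dense case $\bSigma \in \mathcal{G}_2$.} Using the prescribed factorization of moments up to order $8$, compute $E[nS_n] = a_n + (n+1)\sum_{i<j}\sigma_{ij}^2$ and expand $\Var(nS_n)$ as a sum over index-matching patterns of the quadruple products $\hat\sigma_{ij}^2 \hat\sigma_{kl}^2$. Each surviving pattern groups into trace invariants $\Tr(\Sigma^2)$, $\Tr(\Sigma^4)$, $\Tr(\Sigma^8)$, and $\max_i \sum_j \sigma_{ij}^2$; the assumptions $\Tr(\Sigma^8)/\Tr^2(\Sigma^4) \to 0$ and $\max_i\sum_j\sigma_{ij}^2/\Tr(\Sigma^2)\to 0$ are chosen precisely so that every resulting contribution is $o\big(n^2(\sum_{i<j}\sigma_{ij}^2)^2\big)$. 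Chebyshev then gives $nS_n - a_n = (n+1)\sum_{i<j}\sigma_{ij}^2(1+o_p(1))$. Plugging in $\sum_{i<j}\sigma_{ij}^2 \gg p \log^2 p$ and checking the two regimes $b_n \asymp \sqrt{n}/p^{3/2}$ when $p \gg n^{5/3}$ and $b_n \asymp 1/p$ otherwise, the assumption $p = O(n^3)$ verifies $b_n \cdot n\sum_{i<j}\sigma_{ij}^2 \gg \log p$ in either regime. Combined with the trivial lower bound $nL_n^2 - 4\log p + \log\log p \ge -4\log p$, the sum again diverges to $+\infty$, exceeding the bounded threshold $c_\alpha$.

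\emph{Main obstacle.} The technical heart of the argument is the uniform variance analysis of $nS_n$ under the dense alternative. Because $\sigma_{ij}$ may be nonzero for every pair, the variance expansion of $\sum_{i<j}\hat\sigma_{ij}^2$ involves mixed moments up to the eighth order indexed by all possible coincidence patterns; organizing these into $\Tr(\Sigma^{2k})$ invariants and showing that the trace conditions genuinely suppress every term other than the mean-shift scale requires a careful combinatorial accounting. The other steps (Chebyshev for $L_n$ under $\mathcal{G}_1$, and the comparison of $b_n \cdot n\sum\sigma_{ij}^2$ with $\log p$ across both regimes of $p$) are comparatively routine given this variance bound.
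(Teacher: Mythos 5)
Your case split and the way you handle each subfamily (force one standardized statistic to $+\infty$ while bounding the other from below, then compare with the fixed threshold $c_\alpha$) is exactly the structure of the paper's Part 2, including the trivial bound $nL_n^2-4\log p+\log\log p\ge -4\log p$ under $\mathcal{G}_2$ and the concentration of $\hat\sigma_{i^*j^*}$ under $\mathcal{G}_1$. Where you genuinely diverge is in how the fluctuations of $S_n$ under the alternative are controlled: the paper's Part 1 proves a full CLT, $(S_n-A_n)/B_n\to N(0,1)$, via a martingale difference decomposition $S_n-A_n=\sum_m G_m+R_1$ and Haeusler's martingale CLT (this is where the hypotheses $\mathrm{tr}(\Sigma^8)/\mathrm{tr}^2(\Sigma^4)\to 0$ and $\max_i\sum_j\sigma_{ij}^2/\mathrm{tr}(\Sigma^2)\to0$ are actually consumed, in verifying the Lindeberg and conditional-variance conditions), and then reads off both tail probabilities from the Gaussian limit with the explicit $A_n$, $B_n$. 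You propose to bypass the CLT entirely and use only the first two moments plus Chebyshev, which indeed suffices for a consistency statement and is more elementary; the price is that the "deferred" variance computation you flag as the main obstacle is precisely the content of the paper's explicit formula for $B_n^2$ (leading terms $n^{-3}[(n-1)^2\mathrm{tr}(\Sigma^4)+(p-n)^2\mathrm{tr}(\Sigma^2)+(n-1)\mathrm{tr}^2(\Sigma^2)+\cdots]$), so you have not actually saved that work, only the subsequent CLT machinery. Two small corrections: in the sparse case the uniform lower bound on $b_n(nS_n-a_n)$ cannot be reduced to "$b_n^2\Var(nS_n)=O(1)$" uniformly over $\mathcal{G}_1$, since $\mathcal{G}_1$ does not cap $\sum_{i<j}\sigma_{ij}^2$ and the variance can blow up with it — you must (as the paper does) play the nonnegative mean shift $(n+1)\sum_{i<j}\sigma_{ij}^2$ against the standard deviation, not against a constant; and since the paper assumes $p\ge n$ throughout, $b_n\asymp\sqrt{n}/p^{3/2}$ in both regimes (your "$b_n\asymp 1/p$" branch corresponds to $p\lesssim n$), though your final comparison $b_n\, n\sum_{i<j}\sigma_{ij}^2\gg\log p$ under $p=O(n^3)$ goes through either way.
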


\section{Simulation Studies}

In Section 6, we shall demonstrate the numerical performance of our proposed test statistics $TS_n^1$ and $TS_n^2$. We numerically investigate them together with testing procedures based on either extreme-value type statistics $L_n$ and $M_n$ or quadratic type statistics $S_n$ and $T_n$. Now, we consider six different models to simulate the data $\mathcal{X}=(\mathbf{x}_1,\cdots,\mathbf{x}_n)'=(x_{ij})_{n\times p}$ for size and power comparisons:
\begin{description}
\item[Model 1a:]  $x_{ij}$'s are independently generated from standard normal distribution $N(0,1)$.
\item[Model 1b:] $x_{ij}$'s are independently generated from standard Cauchy distribution with the probability density function $f(x)=1/(\pi(1+x^2))$.
\item[Model 2a:] $\mathbf{x}_1,\cdots,\mathbf{x}_n$ are independently generated from a Gaussian distribution $N_{p}(0,
\Sigma)$ with $\Sigma=(\sigma_{ij})_{p\times p}$, $\sigma_{11}=\cdots=\sigma_{pp}=1$, $\sigma_{12}=2.5\sqrt{\frac{\log p}{n}}$ and $\sigma_{ij}=0$ for others.
\item[Model 2b:] $\mathbf{x}_1,\cdots,\mathbf{x}_n$ are independently generated as follows: we first generate $z_1, \ldots, z_p$ from standard Cauchy distribution, and then construct $\mathbf{x}=(x_1,\cdots,x_p)'$ by setting $x_1= z_1+ \sqrt{\frac{\log p}{n}}z_2$,  $x_2= z_2+ \sqrt{\frac{\log p}{n}}z_1$ and $x_j=z_j$ for $3\leq j\leq p$.
\item[Model 3a:] $\mathbf{x}_1,\cdots,\mathbf{x}_n$ are independently generated from a Gaussian distribution $N_{p}(0,
\Sigma)$ with $\Sigma=I_p+D_p$ and $D_p=(2\log p/p)_{p\times p}$.
\item[Model 3b:]  $\mathbf{x}_1,\cdots,\mathbf{x}_n$ are independently generated as follows: we first generate $z_1, \ldots, z_p$ from standard Cauchy distribution, and then construct $\mathbf{x}=(x_1,\cdots,x_p)'$ by setting $x_j=z_j+\frac{1}{10p}\sum_{i\neq j}z_i$ for $1\leq j\leq p$.
\end{description}

For each simulated model, we let $n=200$ and $p=50, 100, 200, 400, 600, 800,$ $\& 1000$. Models 1a and 1b mimic the null hypothesis $\mathbf{H}_{0}$ to examine the size. To compare the test power, Models 2a and 2b consider the sparse alternative hypothesis $\mathbf{H}_{1}: \bSigma\in \mathcal{G}_1$, and Models 3a and 3b consider the dense alternative hypothesis $\mathbf{H}_{1}: \bSigma\in \mathcal{G}_2$.  We compare the numerical results for $L_n$, $M_n$, $S_n$, $T_n$, $TS_n^1$ and $TS_n^2$ in Models 1a, 2a and 3a, since Condition (C) is obviously satisfied by the normal data. On the other hand, as the Cauchy data violates Condition (C), we only compare $M_n$, $T_n$, and $TS_n^2$ in Models 1b, 2b and 3b. We employ the limiting distribution (i.e., $N(0,1)$) to obtain the threshold for $S_n$ and $T_n$, and use the empirical approximation of intermediate limiting distributions to obtain the threshold $c_{\alpha}$ for $L_n$, $M_n$, $TS_n^1$ and $TS_n^2$ as discussed in Remark 6. Moreover, we specify the significance level $\alpha=0.05$ for each test. For each simulation model, we simulate $1000$ independent datasets, and each dataset consists of $2000$ independent samples.

Simulation results are summarized in Tables 1--3, showing the average percentages of rejections out of $1000$ independent repetitions. Based on Tables 1--3, we have the following observations. For the normal data, compared with $L_n$ and $S_n$, $TS_n^1$ still achieves a favorable size in Model 1a. $L_n$ clearly suffers from low power against dense alternatives, and $S_n$ suffers from low power against sparse alternatives. However, $TS_n^1$ retains the nice power against both the sparse alternative in Model 2a and the dense alternative in Model 3a, which is very appealing to real-world analysis. Moreover, those rank-based testing procedures (i.e., $M_n$, $T_n$, and $TS_n^2$) are slightly outperformed by $S_n$, $L_n$, and $TS_n^1$ respectively. For the Cauchy data, $TS_n^2$ also achieves the favorable size and the best power against both sparse and dense alternatives. In a summary, our proposed test statistics $TS_n^1$ and $TS_n^2$ work best for high-dimensional independence tests with more general alternatives.

 \begin{table}[ht]
\caption{Performance of testing independence in simulated models 1a and 1b.} 
\vspace{.1in}
\centering 
\begin{tabular}{ c c c c c c c c c c c c  } 
\hline\hline 
 &\multicolumn{6}{c}{Model 1a} & \multicolumn{3}{c}{Model 1b} \\
\cline{2-7}  \cline{9-11}
$p$ &$S_n$ &$L_n$ & $TS^1_n$& $T_n$&$M_n$ &$TS^2_n$&  &$T_n$&$M_n$ &$TS^2_n$   \\ [0.5ex] 
\hline 
 50 & 0.0533 & 0.0518& 0.0607 & 0.0532&0.0523&  0.0613& &0.0531&  0.0523&0.0619 \\ 
  100 & 0.0524& 0.0554&  0.0604& 0.0525&0.0549& 0.0609&&0.052&0.0546&0.0607\\
     200 & 0.0516 & 0.0541&  0.0621 & 0.0515&0.0635&  0.066&&0.0518&0.0619&0.0657\\
 400 & 0.0513& 0.0510&  0.0571& 0.0515&0.0598& 0.0636&&0.0514& 0.0615&0.0639\\
  600 & 0.0513 & 0.0535&  0.0537& 0.0513&0.0675& 0.0676&&0.0514&0.0672&0.0673\\
  800 & 0.0517 & 0.0592&0.0518& 0.0516&0.0739&0.0666&&0.0512&0.0736& 0.0659\\
  1000&  0.0512& 0.0586& 0.0487 &0.0516&0.0845&0.0653&&0.0517&0.0856&0.0646\\
 \hline
 \hline
\end{tabular}
\end{table}

   \begin{table}[ht]
\caption{Performance of testing independence in simulated models 2a and 2b.} 
\vspace{.1in}
\centering 
\begin{tabular}{ c c c c c c c c c c c c  } 
\hline\hline 
 &\multicolumn{6}{c}{Model 2a} & \multicolumn{3}{c}{Model 2b} \\
\cline{2-7}  \cline{9-11}
$p$ &$S_n$ &$L_n$ & $TS^1_n$& $T_n$&$M_n$ &$TS^2_n$&  &$T_n$&$M_n$ &$TS^2_n$   \\ [0.5ex] 
\hline
 50 & 0.1287 & 0.8967& 0.8955 & 0.1205&0.8289&  0.8297& &0.1701& 0.9842&0.9835 \\ 
 100 & 0.0899& 0.9349&  0.9306& 0.0868&0.8745& 0.8708&&0.1044&0.9863&0.9851\\
     200 & 0.0717 & 0.9590&  0.9579& 0.0704&0.9162& 0.9103&&0.0761& 0.9872&0.9861\\
      400 & 0.0619 & 0.9778&  0.9752& 0.0617&0.9429& 0.9387&&0.0638&0.9883&0.9864\\
    600 & 0.0588& 0.9851&  0.9826& 0.0581&0.9589& 0.9543&&0.0593&0.9883&0.9874\\
    800 & 0.0571 & 0.9890&0.9863& 0.0568&0.9682&0.9623&&0.0577&0.9902& 0.9885\\
 1000&  0.0561& 0.9912& 0.9892 &0.0568&0.9755&0.9679&&0.0570&0.9912&0.9883\\
\hline
\end{tabular}
\end{table}

 \begin{table}[ht]
\caption{Performance of testing independence in simulated models 3a and 3b.} 
\vspace{.1in}
\centering 
\begin{tabular}{ c c c c c c c c c c c c  } 
\hline\hline 
 &\multicolumn{6}{c}{Model 2a} & \multicolumn{3}{c}{Model 2b} \\
\cline{2-7}  \cline{9-11}
$p$ &$S_n$ &$L_n$ & $TS^1_n$& $T_n$&$M_n$ &$TS^2_n$&  &$T_n$&$M_n$ &$TS^2_n$   \\ [0.5ex] 
\hline
 50 & 1 &  0.8896& 1 & 1&0.8582 & 1& &0.9936 & 0.4959&0.968 \\ 
  100 & 1 & 0.5331& 1& 1&0.4732& 0.9994&&0.9987&0.5078&0.9886\\
  200 & 1& 0.2245&  0.9993& 1&0.2217& 0.9993&&0.9994&0.5548&0.9968\\
   400 & 0.9993 & 0.1055&  0.9982 & 0.9993&0.1145& 0.9972&&0.9991&0.5759&0.9993\\
  600 & 0.9994 & 0.0846&  0.9926& 0.9997&0.1103& 0.9873&&0.9990&0.6122&0.9997\\
  800 & 0.9990 & 0.0808&0.9763 & 0.9999&0.0997&0.9645&&0.9994&0.6429& 0.9995\\
  1000&  0.9991& 0.0773& 0.9441&0.9984&0.1059&0.928&&1& 0.6852&0.9999\\
 \hline
 \hline
\end{tabular}
\end{table}

\section{Discussion}
\label{disc}

Under the ultra high-dimensional setting, our work proposes two novel testing procedures with explicit limiting distributions to boost the power against general alternatives. Surprisingly, we prove that extreme-value form and quadratic form statistics are asymptotically independent. Our work provides very important insights to high-dimensional independence tests, and fills an important gap. To the best of our knowledge, this is the first work in the statistics literature to derive the asymptotic independence of the quadratic form statistics and the extreme-value form statistics in high-dimensional inference.

Motivated by this work, there are many interesting research questions to explore in high-dimensional inference. For example, it is very interesting to study the joint limiting laws of quadratic form test statistic \citep{lichen-2012} and extreme-value form test statistic \citep{caixia-2013} in the two-sample covariance matrix testing problem. Moreover, as shown in \cite{caima-2013}, it is important to find the optimal hypothesis testing procedure for high dimensional covariance matrices against more general alternatives.

\section{Proofs}
\label{proof.sec}


We first cite two lemmas as our technical tools, which are needed subsequently in the proofs.  Lemma \ref{stein} gives a Poisson approximation result, which is essentially a special case of Theorem 1 from \cite{arratia-etal-1989}. Lemma \ref{shao} gives a Berry-Esseen type large deviation bounds, i.e., Example 1 from \cite{sakhanenko1991}. See also Lemma 6.2 from Liu et al (2008).

\begin{lemma}\label{stein} Let $I$ be an index set and $\{B_{\alpha}, \alpha\in I\}$ be a set of subsets of $I,$ that is, $B_{\alpha}\subset I$ for each $\alpha \in I.$  Let also $\{\eta_{\alpha}, \alpha\in I\}$ be random variables. For a given $t\in \mathbb{R},$ set $\lambda=\sum_{\alpha\in I}P(\eta_{\alpha}>t).$ Then
\beaa
|P(\max_{\alpha \in I}\eta_{\alpha} \leq t)-e^{-\lambda}| \leq (1\wedge \lambda^{-1})(b_1+b_2+b_3)
\eeaa
where
\beaa
& & b_1=\sum_{\alpha \in I}\sum_{\beta \in B_{\alpha}}P(\eta_{\alpha} >t)P(\eta_{\beta} >t),\\
& & b_2=\sum_{\alpha \in I}\sum_{\alpha\ne \beta \in B_{\alpha}}P(\eta_{\alpha} >t, \eta_{\beta} >t),\\
 & & b_3=\sum_{\alpha \in I}E|P(\eta_{\alpha} >t|\sigma(\eta_{\beta}, \beta \notin B_{\alpha})) - P(\eta_{\alpha} >t)|,
\eeaa
and $\sigma(\eta_{\beta}, \beta \notin B_{\alpha})$ is the $\sigma$-algebra generated by $\{\eta_{\beta}, \beta \notin B_{\alpha}\}.$
In particular, if $\eta_{\alpha}$ is independent of $\{\eta_{\beta}, \beta \notin B_{\alpha}\}$ for each $\alpha,$ then $b_3=0.$
\end{lemma}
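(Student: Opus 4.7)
The plan is to prove Lemma \ref{stein} via the Chen--Stein method for Poisson approximation, following the strategy of \cite{arratia-etal-1989}. First I would introduce the Bernoulli indicators $X_\alpha = \mathbf{1}\{\eta_\alpha > t\}$ with $p_\alpha := P(\eta_\alpha > t)$ and their sum $W = \sum_{\alpha \in I} X_\alpha$, so that $\{\max_\alpha \eta_\alpha \leq t\} = \{W = 0\}$ and $EW = \lambda$. The target then becomes bounding $|P(W = 0) - e^{-\lambda}|$.

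The key analytic ingredient is Stein's equation for the Poisson: for $h(k) = \mathbf{1}\{k = 0\}$, there is a bounded solution $g : \mathbb{Z}_{\geq 0} \to \mathbb{R}$ of $\lambda g(k+1) - k g(k) = h(k) - e^{-\lambda}$, and the classical Chen--Stein bounds give $\|g\|_\infty \leq 1\wedge \lambda^{-1}$ and $\|\Delta g\|_\infty := \sup_k |g(k+1)-g(k)| \leq 1\wedge \lambda^{-1}$. Substituting $W$ and taking expectations yields
\begin{eqnarray*}
P(W=0) - e^{-\lambda} = E[\lambda g(W+1) - W g(W)] = \sum_{\alpha \in I} \bigl\{ p_\alpha E g(W+1) - E[X_\alpha g(W)] \bigr\},
\end{eqnarray*}
reducing the problem to estimating each $\alpha$-summand.

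Next I would exploit the neighborhood structure by writing $W = X_\alpha + W_\alpha + V_\alpha$, where $W_\alpha = \sum_{\beta \in B_\alpha \setminus \{\alpha\}} X_\beta$ is the local piece and $V_\alpha = \sum_{\beta \notin B_\alpha} X_\beta$ is the remote piece. Adding and subtracting $g(V_\alpha + 1)$ rewrites each $\alpha$-summand as
\begin{eqnarray*}
p_\alpha E[g(W+1) - g(V_\alpha + 1)] - E\bigl[X_\alpha (g(W) - g(V_\alpha + 1))\bigr] + E[(p_\alpha - X_\alpha) g(V_\alpha + 1)].
\end{eqnarray*}
The first piece is bounded by $\|\Delta g\|_\infty p_\alpha E[X_\alpha + W_\alpha]$ and sums to $\|\Delta g\|_\infty b_1$; on $\{X_\alpha = 1\}$ one has $W - V_\alpha - 1 = W_\alpha$, so the second piece is bounded by $\|\Delta g\|_\infty E[X_\alpha W_\alpha]$ and sums to $\|\Delta g\|_\infty b_2$; the third piece, after conditioning on $\mathcal{F}_\alpha := \sigma(\eta_\beta, \beta \notin B_\alpha)$ (which contains $V_\alpha$) and pulling out the $\mathcal{F}_\alpha$-measurable factor $g(V_\alpha + 1)$, is bounded by $\|g\|_\infty E\bigl|P(\eta_\alpha > t \mid \mathcal{F}_\alpha) - p_\alpha\bigr|$ and sums to $\|g\|_\infty b_3$. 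Collecting the three estimates and inserting the common prefactor $1 \wedge \lambda^{-1}$ gives the claimed bound; in the independent case the third piece vanishes identically, giving $b_3 = 0$.

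The main obstacle is not the bookkeeping, which is essentially mechanical, but establishing the sharp Stein-solution bounds $\|g\|_\infty, \|\Delta g\|_\infty \leq 1\wedge\lambda^{-1}$ rather than the trivial $\leq \|h\|_\infty = 1$. These require the explicit representation of $g$ in terms of Poisson tail probabilities of a $\mathrm{Poisson}(\lambda)$ variable followed by a monotonicity/convexity argument on its first differences, and they are precisely what produces the useful $\lambda^{-1}$ improvement when $\lambda$ is of constant or larger order.
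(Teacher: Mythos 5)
Your proposal is correct, but note that the paper does not prove this lemma at all: it is quoted verbatim as a known tool, ``essentially a special case of Theorem 1 from Arratia, Goldstein and Gordon (1989),'' so there is no in-paper argument to compare against. What you have written is precisely the standard Chen--Stein proof from that reference, and every step checks out: the identification $\{\max_\alpha \eta_\alpha\le t\}=\{W=0\}$, the Stein equation for $h=\mathbf{1}_{\{0\}}$, the split $W=X_\alpha+W_\alpha+V_\alpha$, and the three-term estimate producing $\|\Delta g\|_\infty(b_1+b_2)+\|g\|_\infty b_3$. Two small points of care. First, your claim $\|g\|_\infty\le 1\wedge\lambda^{-1}$ is \emph{not} the generic Chen--Stein bound (for a general target set $A$ one only gets $\|g_A\|_\infty\le\min(1,1.4\lambda^{-1/2})$); it is special to $A=\{0\}$, where $g(k)=\frac{(k-1)!}{\lambda^{k}}P(\mathrm{Poisson}(\lambda)\ge k)$ is decreasing in $k$ with $g(1)=(1-e^{-\lambda})/\lambda$. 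You correctly flag that this needs the explicit representation, and it is exactly what lets $b_3$ share the $(1\wedge\lambda^{-1})$ prefactor. Second, your bookkeeping for the first piece tacitly assumes $\alpha\in B_\alpha$, so that the diagonal contribution $p_\alpha E[X_\alpha]=p_\alpha^2$ is absorbed into $b_1$; this is the intended convention (the indexing $\alpha\ne\beta\in B_\alpha$ in $b_2$ confirms it), but it is worth stating explicitly.
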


\begin{lemma}\lbl{shao} Let $\xi_i, 1\leq i \leq n,$ be independent random variables with $E\xi_i=0.$ Put
\begin{eqnarray*}
s_n^2=\sum_{i=1}^nE\xi_i^2,\ \ \ \varrho_n=\sum_{i=1}^nE|\xi_i|^3,\ \ \ S_n=\sum_{i=1}^n\xi_i.
\end{eqnarray*}
Assume $\max_{1\leq i \leq n}|\xi_i| \leq y_ns_n$ for some $0<y_n \leq 1.$ Then
\begin{eqnarray*}
P(S_n\geq xs_n) =e^{\gamma_n(x/s_n)}(1-\Phi(x))(1+\theta_{n,x}(1+x)s_n^{-3}\varrho_n)
\end{eqnarray*}
for $0<x \leq 1/(18y_n),$ where $|\gamma_n(x)| \leq 2x^3\varrho_n$ and $|\theta_{n,x}|\leq 36.$
\end{lemma}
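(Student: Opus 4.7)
The statement is a classical Cram\'er-type moderate deviation expansion, and the natural route (the one Sakhanenko follows in the cited reference) is exponential tilting combined with a non-uniform Berry--Esseen bound on the tilted law. For $h\geq 0$ I would introduce the conjugate (Esscher) measure $d\tilde P_h/dP \propto \exp(hS_n)$ and write $\psi_i(h) = \log E[\exp(h\xi_i)]$. The basic change-of-measure identity is
\[P(S_n \geq x s_n) \;=\; \exp\!\Bigl(\textstyle\sum_{i=1}^n \psi_i(h) - h x s_n\Bigr) \cdot \tilde E_h\!\left[ e^{-h(S_n - x s_n)}\mathbf{1}\{S_n \geq x s_n\}\right].\]
I would pick $h=h(x)$ so that $\sum_i \psi_i'(h) = x s_n$, i.e., the mean of $S_n$ under $\tilde P_h$ is exactly the target level. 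The hypothesis $\max_i|\xi_i|\leq y_n s_n$ together with $x\leq 1/(18 y_n)$ ensures that each $\psi_i$ has a convergent Taylor expansion at $0$ with explicit remainders, so that $h = x/s_n + O(\varrho_n x^2/s_n^4)$ is well defined and the tilted variance $\tilde\sigma_n^2 := \sum_i \psi_i''(h)$ stays close to $s_n^2$.

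Next, I would extract the Gaussian factor from the exponential prefactor by cumulant expansion. Writing $\psi_i(h) = \tfrac12 h^2 E\xi_i^2 + \tfrac16 h^3 E\xi_i^3 + R_i(h)$, summing over $i$ and inserting the defining relation for $h$, a direct computation gives
\[\textstyle\sum_{i=1}^n \psi_i(h) - h x s_n \;=\; -\tfrac12 x^2 + \gamma_n(x/s_n),\qquad |\gamma_n(x/s_n)|\leq 2 x^3 s_n^{-3}\varrho_n,\]
the remainder collecting the cubic and higher-order cumulant contributions.

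For the second factor, I would rescale $Z := (S_n - xs_n)/\tilde\sigma_n$ so that under $\tilde P_h$ it has mean $0$, variance $1$, and third absolute moment comparable to $\varrho_n/s_n^3$. Applying a non-uniform Berry--Esseen estimate to the distribution function $\tilde F_n$ of $Z$, then integrating against the kernel $e^{-h\tilde\sigma_n t}$ over $t\geq 0$ and completing the square, yields
\[\tilde E_h\!\left[e^{-h(S_n - xs_n)}\mathbf{1}\{S_n \geq xs_n\}\right] \;=\; e^{x^2/2}\bigl(1-\Phi(x)\bigr)\bigl(1+\theta_{n,x}(1+x) s_n^{-3}\varrho_n\bigr),\]
with $|\theta_{n,x}|\leq 36$. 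Multiplying this by the exponential prefactor and noting the cancellation of $e^{\pm x^2/2}$ produces the claimed form.

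The main obstacle is pinning down the explicit absolute constants ($2$ in the cubic cumulant remainder and $36$ in the Berry--Esseen factor); both rely delicately on the range $x\leq 1/(18 y_n)$, which keeps $h\tilde\sigma_n$ small enough that fourth-order Taylor and Edgeworth remainders remain strictly dominated by the cubic terms they correct. Since these constants are worked out in full in Example~1 of Sakhanenko (1991), the cleanest presentation in the present paper is to invoke that source directly rather than reproduce the bookkeeping.
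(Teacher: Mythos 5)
The paper does not prove this lemma at all: it is imported verbatim as Example~1 of Sakhanenko (1991) (see also Lemma~6.2 of Liu, Lin and Shao, 2008), exactly as you recommend in your closing paragraph. Your sketch of the underlying argument --- conjugate (Esscher) tilting, cumulant expansion of the prefactor, and a non-uniform Berry--Esseen bound on the tilted law --- is the standard and correct route to such Cram\'er-type expansions, so your treatment is consistent with the paper's.
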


\subsection{Proof of Lemma \ref{lawSn}}

\begin{proof}[Proof of Lemma \ref{lawSn}]

Our proof is divided into two parts. In the first part, we derive the rate of convergence of central limit theorem when $p\gg n^{5/3}$. Note that $nS_n=\sum_{1\le i<j\le p} n\hat{\sigma}_{ij}^2$ is the sum of  nonlinear statistics. Motivated by the concentration inequality approach in \cite{chenshao-2007}, we define that $g(X_i)=E(n\hat{\sigma}_{ij}^2|X_i)-1=\frac{1}{n}\sum_{k=1}^n(x_{ki}^2-1)$ and $\sigma_i^2=E\{g^2(X_i)\}= \frac{1}{n}(Ex_{1i}^4-1)$. It is not difficult to check that $\text{Var}(n\hat{\sigma}_{ij}^2)=\frac{Ex_{1i}^4Ex_{1j}^4-3}{n}+2.$ Now, given $p\gg n^{5/3}$, we apply Theorem 3.2 of Chen and Shao (2007) to obtain that
 \begin{eqnarray}\lbl{CS}
\sup_{z}\Big|P\big( b_n(nS_n-a_n)\leq z\big)-\Phi(z)\Big|\leq \frac{C\sqrt{n}}{\sqrt{p}},
 \end{eqnarray}
where  $a_n={p\choose 2}$ and $b_n=\sqrt{n}/({(p-1)\sqrt{\sum_{i=1}^p(Ex_{1i}^4-1)}})$.

In the second part, we use the martingale method of \cite{haeusler-1988} to show the rate of convergence for the general case. Define $\mathcal{F}_0=\{\emptyset, \Omega\}$, $\mathcal{F}_{k}=\sigma\{X_1,\ldots,  X_k\}$, $k=1,2,\ldots, p$. Let $E_k$ denote the conditional expectation of given $\mathcal{F}_k$. Define $Z_n=n^2S_n/{ p \choose 2}$. Then, we write $Z_n-EZ_n=\sum_{k=1}^{p}M_{k}$, where $M_{k}=(E_k-E_{k-1})Z_n$ and $E_{k}=E(\cdot | \mathcal{F}_k)$. Then for every $p$, $\{M_{k}, 1\leq k\leq p\} $ is a martingale difference sequence with respect to ${\mathcal{F}_k, 1\leq k\leq p}$.
By using the martingale central limit theorem \citep{haeusler-1988}, we know
$$
 \sup_t |P(\frac{Z_n-EZ_n}{\sqrt{\text{Var}(Z_n)}}\leq t )-\Phi(t)| \leq C E\bigg(\frac{\sum_{k=1}^pE_{k-1}(M_{k}^2)}{\text{Var}{(Z_n)}}-1\bigg)^{2/5} +C \bigg(\frac{\sum_{k=1}^pE (M_{k}^4)}{\text{Var}^2{(Z_n)}}\bigg)^{1/5}.
$$
Now it only remains to bound the following two terms:
\begin{eqnarray}\lbl{smelly}
E\bigg(\frac{\sum_{k=1}^p E_{k-1}(M_{k}^2)}{\text{Var}{(Z_n)}}-1\bigg)^2\,   \mbox{and}\, \frac{\sum_{k=1}^pE M_{k}^4}{\text{Var}^2{(Z_n)}}.
 \end{eqnarray}
First we bound the first term of (\ref{smelly}). To this end, we bound $\text{Var}{(Z_n)}$ and $\text{Var}(E_{k-1}(M_{ k}^2))$ respectively. Recall that $Z_n-EZ_n=\sum_{k=1}^{p}M_{k}$. It is not difficult to find that $$M_{k}=\frac{2}{p(p-1)}\sum_{i=1}^{k-1}\{(X_k'X_i)^2-X_k'X_k-X_i'X_i+n\}+\frac{2}{p}(X_k'X_k-n).$$ Given the explicit form of $M_{k}$, we may derive the conditional expectation $E_{k-1}(M_{ k}^2)$ as
\begin{eqnarray*}
&& E_{k-1}(M_{ k}^2)\\&=&\frac{8}{p^2(p-1)^2}  \sum_{i=1}^{k-1}\sum_{j=1}^{k-1} \{(X'_iX_{j})^2-X_{i}'X_i -X^{T}_jX_j+n\}\\&&+\frac{4(Ex_{1k}^4-3)}{p^2(p-1)^2} \sum_{i=1}^{k-1}\sum_{j=1}^{k-1} \sum_{m=1}^n (x_{mi}^2-1)(x_{mj}^2-1)\\&&+\frac{8(Ex_{1k}^4-1)}{p^2(p-1)}  \sum_{i=1}^k\sum_{m=1}^n (x_{mi}^2-1)+
 \frac{4n(Ex_{1k}^4-1)}{p^2}\\
& := &\frac{8}{p^2(p-1)^2} A_k+\frac{4(Ex_{1k}^4-3)}{p^2(p-1)^2}B_k+ +\frac{8(Ex_{1k}^4-1)}{p^2(p-1)} C_k+ \frac{4n(Ex_{1k}^4-1)}{p^2}
 \end{eqnarray*}
and also the expectation $E (M_{k}^2)=E(E_{k-1}(M_{ k}^2))$ by using the law of iterated expectation:
$$
E (M_{k}^2)=\frac{4(Ex_{1k}^4-1)n\sum_{i=1}^{k-1}(Ex_{1i}^4-1)+8n(n-1)(k-1)  }{p^2(p-1)^2}   + \frac{4n(Ex_{ik}^4-1)}{p^2}.
$$
Then, it immediately implies that
 $$\text{Var}(Z_n)=\frac{4n(n- 1)}{p(p-1)}+\frac{4n\sum_{1\leq i<j\leq p}(Ex_{1i}^4-1)(Ex_{1j}^4-1)}{p^2(p-1)^2}+\frac{4n\sum_{i=1}^p(Ex_{1i}^4-1)}{p^2}.$$
Thus, we have $$\text{Var}(Z_n)=C\frac{n^2}{p^2}+C\frac{n}{p}.$$

Note that
$$\text{Var}(\sum_{k=1}^pE_{k-1}(M_{ k}^2))=\sum_{k=1}^p (2p-2k+1)\text{Var}(E_{k-1}(M_{ k}^2)),$$
Then, $\text{Var}(E_{k-1}(M_{ k}^2))\leq C[\frac{1}{p^4(p-1)^4}\{\text{Var}(A_k)+\text{Var}(B_k)\}+\frac{1}{p^4(p-1)^2}\text{Var}(C_k)].$
Moreover, we have $\text{Var}(A_k)=Ckn^3+k^2n^2$, $\text{Var}(B_k)=Ck^2n$ and $\text{Var}(C_k)=Ckn $.
Hence, we obtain
$$
E\bigg(\frac{\sum_{k=1}^pE_{k-1}(M_{ k}^2)}{\text{Var}{(Z_n)}}-1\bigg)^2\leq C( \frac{1}{n^2}+\frac{1}{p}).
$$

Next we bound the second term of (\ref{smelly}). We may follow the proof of \cite{chen-etal-2010} to show that
$
 \sum_{k=1}^pE (M_{k}^4)=C\frac{n^2}{p^3}+C\frac{n^4}{p^5}.
$
Thus, we
$$\frac{\sum_{k=1}^pE (M _{k}^4)}{\text{Var}^2{(Z_n)}}\leq C\frac{1}{p}.$$

By noting that $V_n=\text{Var}{(Z_n)}$, the proof of Lemma 1 is complete.
\end{proof}

\subsection{Proof of Lemma 2}

\begin{proof}[{Proof of Lemma 2}]
Let $\alpha_p=\sqrt{4\log p-\log\log p+y} $.
We consider three regions respectively to obtain the uniform convergence rate: $y\geq 2\log p$, $-2\log\log p^{\theta}<y<2\log p$, and $y\leq -2\log\log p^{\theta}$. The convergence rate for $y\geq 2\log p$ or $y\leq -2\log\log p^{\theta}$ is similarly proved as in Theorem 1. Thus, we only need to consider the case when $-2\log\log p^{\theta}\leq y\leq 2 \log p$, where $\theta=4\sqrt{\pi}.$ Now, with $I:=\{(j,k): 1\leq j<k\leq d \}$ and $B_a=\{(s,t)\in I: (s,t)\neq (j, k), (s,t)\cap (i,j)\neq \emptyset\}$ for $a=(i, j)\in I$, we can use Lemma \ref{stein} to obtain the following uniform convergence rate,
\begin{eqnarray*}\sup_{-2\log\log p^{\theta}\leq y\leq 2 \log p}\Big|P_{L_n}(y)-\exp\big(-\frac{p^2-p}{2}P(
|X_i^TX_j|/\sqrt{n}\geq \alpha_p )\big)\Big|=O(\frac{1}{p^{-1+\epsilon}})\\   \end{eqnarray*}
where we have used the simple facts that $b_3=0$ and $b_1+b_2\leq O(\frac{1}{p^{-1+\epsilon}}).$

We define $h_n=C_0 \log p$ with $C_0=6/t_0$. Let $y_m=x_{mi}x_{mj}I(|x_{mi}x_{mj}|\leq h_n)$  and $\mu=Ey_m.$ It is not difficult to show that $P(|x_{mi}x_{mj}|\geq h_n)=O(p^{-6})$.   Then we have $$\Big|P( |X_i^TX_j|/\sqrt{n}\geq \alpha_p )- P(|\sum_{m}(y_m-\mu)|/\sqrt{n}\geq   \alpha_p \pm \sqrt{(\log p)^3/n})\Big|=O(p^{-4}).$$
Now by using Lemma \ref{shao}, we have
$$P(|\sum_{m}(y_m-\mu)|/\sqrt{n}\geq   \alpha_p \pm \sqrt{(\log p)^3/n})=(1-\Phi(\alpha_p \pm \sqrt{(\log p)^3/n}))(1+C\sqrt{\frac{(\log p)^3}{n}}).$$
Then
\begin{eqnarray*}\sup_{-2\log\log p^{\theta}\leq y\leq 2 \log p}\Big|\exp\big(-\frac{p^2-p}{2}P( |X_i^TX_j|/\sqrt{n}\geq \sqrt{4\log p-\log\log p+y} )\big)\\-\exp\big(-(p^2-p)(1-\Phi(\sqrt{4\log p-\log\log p+y}))\big)\Big|\leq C\frac{(\log p)^{3/2}}{\sqrt{n}}.   \end{eqnarray*}Therefore, the proof of Lemma 2 is complete.
\end{proof}

\subsection{Proof of Theorem 1}

To simplify notation, we define $y_n=4\log p - \log\log p + y$, $t_p=4\log p - \log\log p -2\log\log p^{\theta}$, and $\theta=4\sqrt{\pi}$. Before proceeding to proof of Theorem 1, we first present an important lemma, whose proof is given in \cite{lixue-2015}.

\begin{lemma}\lbl{xia3} Rearrange the two dimensional indices $\{(i,j): 1\leq i<j\leq p\}$ in any ordering and set them as $\{(i_l, j_l): 1\leq l\leq q={p \choose 2}\}.$ Define $V_l=(X_{i_l}'X_{j_l})^2/n $. Under the same conditions of Theorem \ref{birthday}, let ${d=O(\log p)}$, we have  \beaa
 \sum_{1\leq l_1<\ldots< l_d\leq q}  P(\cap_{t=1}^d \{V_{l_t}\geq 4\log p - \log\log p + y\})\leq(\frac{e^{-y/2+3}}{d})^d.\eeaa
 \end{lemma}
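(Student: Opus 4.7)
I interpret the sum as $E\binom{N}{d}$ where $N = \sum_{l=1}^q I(V_l \geq y_n)$ with $y_n := 4\log p - \log\log p + y$; the lemma then amounts to a Poisson-style factorial-moment bound. The plan is to partition the $d$-tuples according to the number $v$ of distinct column indices appearing in $\{i_{l_t},j_{l_t}:1\leq t\leq d\}$, and handle the ``vertex-disjoint'' case $v=2d$ (main term) and the ``overlapping'' case $v<2d$ (error) separately.

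The single-pair tail has already been derived inside the proof of Lemma \ref{lawLn}: truncation plus Lemma \ref{shao} gives $P(V_l \geq y_n) \leq C_0 e^{-y/2}/p^2$ uniformly in the relevant range of $y$. Condition (C) further implies that $\|X_i\|^2/n$ concentrates exponentially near $1$, so on a high-probability event the same Gaussian-tail bound applies to the conditional probability $P(V_{(i,j)}\geq y_n\mid X_i)\leq C_0 e^{-y/2}/p^2$. When the $d$ pairs involve $2d$ distinct columns they form a perfect matching and the events $\{V_{l_t}\geq y_n\}$ are mutually independent; the number of such unordered $d$-tuples equals $\binom{p}{2d}(2d)!/(2^d d!)\leq p^{2d}/(2^d d!)$. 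Their contribution is therefore at most
\[
\frac{p^{2d}}{2^d d!}\cdot\Big(\frac{C_0 e^{-y/2}}{p^2}\Big)^d = \frac{1}{d!}\Big(\frac{C_0 e^{-y/2}}{2}\Big)^d \leq \Big(\frac{eC_0 e^{-y/2}}{2d}\Big)^d \leq \Big(\frac{e^{-y/2+3}}{d}\Big)^d
\]
by Stirling's $d!\geq(d/e)^d$ and choosing constants so that $eC_0/2\leq e^3$.

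For the remaining $d$-tuples, whose induced pair-graph uses $v<2d$ vertices, iterated conditioning on shared high-degree columns (applying the conditional tail bound above, and a bivariate Gaussian-tail estimate to kill residual cycle-dependencies) yields a bound of the form $P(\cap_t V_{l_t}\geq y_n) \leq (C_1 e^{-y/2}/p^2)^d + e^{-cn}$. The number of $v$-vertex configurations is smaller than the disjoint count by a factor $p^{2d-v}\geq p$, and since $d=O(\log p)$ all accompanying combinatorial factors $\binom{\binom{v}{2}}{d}/d!$ grow only polylogarithmically in $p$. Summing over $v<2d$ therefore gives a total overlapping contribution dominated by the main term times $p^{-1+o(1)}$, which is absorbed into the constant $e^3$ in the exponent.

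The main obstacle is the uniform joint-probability bound in Step~3, specifically for pair-graphs containing cycles: a single round of conditioning on one column decouples star-shaped subgraphs but not cycles, so one must iterate, and at each stage control a residual joint tail of several linear forms in one remaining column. The key technical input is that, conditional on columns with typical norms, the relevant correlation vector is approximately mean-zero multivariate Gaussian with covariance close to the identity, since on the events in question the empirical correlations are $O(\sqrt{\log p/n})=o(1)$ under $\log p=o(n^{1/6})$. Condition (C) is essential here, both for the exponential concentration of $\|X_i\|^2/n$ and for Gaussian approximation via Lemma \ref{shao}, and ensures that the accumulated $e^{-cn}$ concentration errors are negligible relative to the polynomial-in-$p$ tail bound that drives the main term.
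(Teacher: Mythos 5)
The paper itself does not prove Lemma \ref{xia3}; it defers the proof to the technical report \cite{lixue-2015}, so I can only assess your argument on its own terms. Your architecture (single-pair moderate-deviation tail via truncation and Lemma \ref{shao}, a factorial-moment/Poisson comparison, splitting $d$-tuples by the number $v$ of distinct columns, independence for vertex-disjoint tuples) is the standard and surely intended route, and your main term computation is correct. However, two steps as written do not close.

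First, the combinatorial claim is false: for $v\le 2d$ and $d=O(\log p)$ one has $\binom{\binom{v}{2}}{d}\le (2ed)^d=e^{\Theta(d\log d)}=p^{\Theta(\log\log p)}$, and even after dividing by $d!$ this is $(2e^2)^d=p^{\Theta(1)}$ with a large exponent (recall $d\asymp e^4\theta\log p$ in the application) --- not ``polylogarithmic in $p$.'' A single factor $p^{-(2d-v)}\ge p^{-1}$ therefore does not absorb it. What saves the argument is structural, not crude counting: a $d$-edge graph on $v=2d-j$ vertices with no isolated vertex has at least $d-2j$ components that are single edges, so it is a perfect matching of $\ge d-2j$ edges together with a ``core'' of at most $2j$ edges on at most $3j$ vertices. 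The count for defect $j$ is then at most $p^{2d-j}(Cj^2)^{2j}/(2^{d-2j}(d-2j)!)$, and the deficit $p^{-j}$ beats the surplus $(Cd^3)^{O(j)}=e^{O(j\log\log p)}$ precisely because $d=O(\log p)$. You need to state and use this decomposition; without it the $v<2d$ sum is not controlled.

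Second, the uniform joint bound $P(\cap_t\{V_{l_t}\ge y_n\})\le (C_1e^{-y/2}/p^2)^d+e^{-cn}$ for \emph{every} configuration is asserted, not proved, and it is exactly where the work lies. Iterated conditioning cleanly yields the exponent $d$ only when the pair-graph is a forest (each new edge brings a fresh column). When the core contains cycles, adding a vertex $v$ determines all $\deg^-(v)$ edges back to already-exposed columns at once, and you must bound a joint tail of several linear forms in $X_v$ whose Gram matrix is only $I+O(d\sqrt{\log p/n})$; the resulting multiplicative error in the exponent is of order $j^2(\log p)^{3/2}/\sqrt n$ per core and must be checked to be $o(j)$ (it is, under $\log p=o(n^{1/6})$, but only because cycles live in the $O(j)$-edge core --- a global accounting gives $d^2(\log p)^{3/2}/\sqrt n$, which is \emph{not} $o(1)$ in this regime). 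Alternatively one may use only a spanning forest of each component, obtaining exponent $v-m$ instead of $d$; then the lost factors $(e^{-y/2}p^{-2})^{-(d-v+m)}$ must be paid for out of the $p^{-j}$ count deficit and the exponential slack $(eC_0/(2e^3))^d$ between your main term and the target --- again a computation you have not done. Finally, note the additive $e^{-cn^{1/3}}$ concentration errors must be compared against the target, which is as small as $e^{-\Theta(d\log p)}=e^{-\Theta((\log p)^2)}$ near $y=2\log p$; this works since $(\log p)^2=o(n^{1/3})$, but it should be said. (As a side remark, the stated inequality can only hold for $y$ bounded below, e.g.\ $y\ge -2\log\log p^{\theta}$ as in its application in Theorem \ref{birthday}; for $y_n\le 0$ the left side is $\binom{q}{d}$ and exceeds the right side for large $p$, so your implicit restriction of the range of $y$ is in fact necessary.)
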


Now we are ready to prove Theorem 1 as follows.

\begin{proof}[Proof of Theorem 1]
The proof is divided into two parts. To obtain the uniform convergence rate for $(y,z)\in\mathbb{R}^2$, we derive the rates of convergence for three regions respectively: $(y\geq 2\log p, z)$, $(-2\log\log p^{\theta}<y<2\log p, z)$, and $(y\leq -2\log\log p^{\theta}, z)$.

In the first part, we consider $(y\geq 2\log p, z)$ and $(y\leq -2\log\log p^{\theta}, z)$ together. To begin with, we introduce a simple but useful fact: for any two events $A$ and $B$, we have
$$|P(A\cap B)-P(A)P(B)|\leq \min(P(A), P(B), P(A^c), P(B^c)).$$
Using this fact, we then have
\be \lbl{qusia}
\sup_{y\leq -2\log\log p^{\theta}, z}|P_{{S_n,L_n}}(z,y)-P_{{S_n}}(z) (1-P_{{L_n}}(y))|\leq\sup_{y\leq -2\log\log p^{\theta}}P_{{L_n}}(y)
\ee
and
\be \lbl{qusib}
\sup_{y\geq 2\log p, z}|P_{{S_n,L_n}}(z,y)-P_{{S_n}}(z) ( 1-P_{{L_n}}(y)) |\leq\sup_{y\geq 2\log p}(1-P_{{L_n}}(y)).
\ee
Using the union bound, we can further bound $\sup_{y\geq 2\log p}P_{{L_n}}(y)$ as follows:
$$\sup_{y\geq 2\log p}{(1-P_{{L_n}}(y))} \leq  p^2 P\Big(\frac{(X_{1}'X_{2})^2}{n}\geq 6\log p-\log\log p \Big)\leq Cp^{-1}.$$
To bound $\sup_{y\leq -2\log\log p^{\theta}}P_{{L_n}}(y)$, we note that $\sup_{y\leq -2\log\log p^{\theta}}P_{{L_n}}(y) \leq P_{{L_n}}( -2\log\log p^{\theta})$ due to the monotonicity of $P_{{L_n}}(\cdot)$. By using Lemma \ref{stein}, we have
$$
P_{{L_n}}( -2\log\log p^{\theta})\leq e^{-\psi_n}+Cp^3P\big(\frac{(X_{1}'X_{2})^2}{n}\geq t_p \big)^2+Cp^3P\big(\frac{(X_{1}'X_{2})^2}{n}\geq t_p,\frac{(X_{1}'X_{3})^2}{n}\geq t_p \big),
$$
where $\psi_n=\frac{p^2-p}{2}P\Big(\frac{(X_{1}'X_{2})^2}{n}\geq t_p \Big).$ Next, by using Lemmas 6.7 and 6.9 of Cai and Jiang (2011), we know that
 $$
 P\Big(\frac{(X_{1}'X_{2})^2}{n}\geq t_p,\frac{(X_{1}'X_{3})^2}{n}\geq t_p \Big)=O(p^{-4+\epsilon})$$
 and
 $$
 P\Big(\frac{(X_{1}'X_{2})^2}{n}\geq t_p \Big)\sim \frac{e^{-t_p/2}(\log p)^{-1/2}}{\sqrt{8\pi}}.
 $$
 Hence, it immediately implies that
 $$\sup_{y\leq -2\log\log p^{\theta}}P_{{L_n}}(y) \leq Cp^{-1+\epsilon}.$$
 Combining (\ref{qusia}), (\ref{qusib}) and this upper bound, we have
 \be\lbl{qusil} \sup_{y\geq 2\log p \ \text{or} \ y\leq -2\log\log p^{\theta}, \ z}|P_{{S_n,L_n}}(z,y)-P_{{S_n}}(z) {(1-P_{{L_n}}(y))}|\leq Cp^{-1+\epsilon}.
\ee

 In the second part, we consider the remaining case $(-2\log \log p^{\theta}< y< 2\log p, z)$.
For ease of notation, we rearrange the ${p \choose 2}$ distinct indices $\{(i,j): 1\leq i<j\leq p\}$ in any ordering such that $W=\{(i_l, j_l): 1\leq l\leq q={p \choose 2}\}.$ Define $V_l=(X_{i_l}'X_{j_l})^2/n $. Then, we have
$$
S_n=\sum_{l=1}^q V_l \quad \text{and} \quad L_n=\max_{l=1,\cdots,q}V_l.
$$
Equivalently, the joint distribution $P_{{S_n,L_n}}(z,y)$ can be written as
\beaa P_{{S_n,L_n}}(z,y)
&=& P\big(\{\max_{l=1,\cdots,q}V_l>y_n\}\cap\{b_n(\sum_{l=1}^q V_l-a_n)\leq z\}\big)\\
&=&P\big(\cup_{l=1}^q[\{V_l>y_n\}\cap\{b_n(\sum_{l=1}^q V_l-a_n)\leq z\}]\big),\eeaa
where we used the fact that $\{\max_{l=1,\cdots,q}V_l>y_n\}=\{\cup_{l=1}^q[V_l>y_n]\}$ in the second equality. Let $B_l=\{V_l>y_n\}\cap\{b_n(\sum_{l=1}^q V_l-a_n)\leq z\}.$ Then, we have $P_{{S_n,L_n}}(z,y)= P(\cup_{l=1}^q B_l)$.
By using Bonferroni inequality, for any even number $d=O(\log p)<[q/2]$, we know that
\be\lbl{shazi}
 \sum_{s=1}^{d}(-1)^{s-1}\sum_{1\leq l_1<\ldots< l_s\leq q} P(\cap_{t=1}^sB_{l_t}  )
  \leq  P_{{S_n,L_n}}(z,y)
\leq \sum_{s=1}^{d-1}(-1)^{s-1}\sum_{1\leq l_1<\ldots< l_s\leq q} P(\cap_{t=1}^sB_{l_t} )
\ee
and also that
\be\lbl{office}
H_d\leq  P\big(\cup_{l=1}^q\{V_l>y_n\} \big)\leq H_{d-1}
\ee
where $H_d=\sum_{s=1}^{d}(-1)^{s-1}\sum_{1\leq l_1<\ldots< l_s\leq q} P(\cap_{t=1}^s\{V_{l_t}>y_n\})$.

Let $\Upsilon_n=  \min(p^{-1/5}, \sqrt{n/p}) .$ We define two index sets $I=\{(i_{l_t}, j_{l_t}), 1\leq t\leq d) \} $  and $W_{I}=\{(i,j), (i,j)\cap (s,t)\neq \emptyset, \, (s,t)\in I \, \mbox{and}\,   (i,j)\in W \}$. The cardinality of $W_I$ is no greater than ${2pd}$. By construction, $\{V_l, {(i_l, j_l)}\in I\}$ and $\{V_{l'}, {(i_{l'}, j_{l'})}\in W/W_I\}$ are independent. Using the fact that $\sum_{l=1}^q V_l-a_n=\sum_{(i_l, j_l)\in W_{I}} (V_l-1)+\sum_{(i_l, j_l)\in W_{I}} (V_l-1)$, we have

\beaa
   P\big(\cap_{t=1}^dB_{l_t}\big)&\leq&
   P\big(\cap_{t=1}^d\{V_{l_t}>y_n\}\big)P\big( b_n\sum_{(i_l, j_l)\in W \setminus W_{I}}( V_l-1)\leq z+\Upsilon_n\big)\\&&+ P\big(b_n \big|\sum_{(i_l, j_l)\in W_{I}} (V_l-1)\big|\geq \Upsilon_n\big)
\eeaa
 and
\beaa
P\big(\cap_{t=1}^dB_{l_t}\big) &\geq& P\big(\cap_{t=1}^d\{V_{l_t}>y_n\}\big)P\big( b_n\sum_{(i_l, j_l)\in W \setminus W_{I}}( V_l-1)\leq z-\Upsilon_n\big)\\&&- P\big(b_n \big|\sum_{(i_l, j_l)\in W_{I}} (V_l-1)\big|\geq \Upsilon_n\big).
\eeaa
Now, we can follow the proof of Lemma \ref{lawSn} to obtain that
$$|P\big( b_n\sum_{(i_l, j_l)\in W \setminus W_{I}}( V_l-1)\leq z\pm \Upsilon_n\big)-P\big( b_n(\sum_{l=1}^q V_l-a_n)\leq z\big)|\leq C\Upsilon_n .
$$
Combining \eqref{office} and the above inequalities, we have
\beaa
 &&P\big(\cup_{l=1}^qB_{l_t}\big)\\&\leq& H_{d-1} P\big( b_n(\sum_{l=1}^q V_l-a_n)\leq z\big) +CH_{d-1}\Upsilon_n+P\big(b_n \big|\sum_{(i_l, j_l)\in W_{I}} (V_l-1)\big|\geq \Upsilon_n\big)\\&\leq&P\big(\cup_{l=1}^q[\{V_l>y_n\}\big)P\big(\{b_n(\sum_{l=1}^q V_l-a_n)\leq z\}]\big)+|H_d-H_{d-1}|\\&&+C \Upsilon_n+P\big(b_n \big|\sum_{(i_l, j_l)\in W_{I}} (V_l-1)\big|\geq \Upsilon_n\big),\eeaa
where we used the triangle inequality and Bonferroni inequality \eqref{office}   in the second inequality. By Lemma \ref{xia3} and \cite{caixia-2013}, as long as $d\geq 3$,  we have \beaa |H_{d-1}-H_{d}|=\sum_{1\leq l_1<\ldots< l_d\leq q} P(\cap_{s=1}^d \{V_{l_s}\geq y_n\})  \leq(\frac{e^{-y/2+3}}{d})^d,
\eeaa
We choose $d$ be the smallest even number which is bigger than $e^4\theta \log p,$, and then
$$\sup_{-2\log \log p^{\theta}<y<2\log p}(\frac{e^{-y/2+3}}{d})^d\leq  p^{-e^{4}\theta}.$$
We make a claim that
\be\lbl{ken0}
 P\big(b_n \big|\sum_{(i_l, j_l)\in W_{I}} (V_l-1)\big|\geq \Upsilon_n\big)\leq  Ce^{-cn^{1/3}},
\ee
which will be proved later. In view of \eqref{ken0}, we have
$$
P\big(\cup_{l=1}^qB_{l_t}\big)\leq P\big(\cup_{l=1}^q[\{V_l>y_n\}\big)P\big(\{b_n(\sum_{l=1}^q V_l-a_n)\leq z\}]\big)+ C \Upsilon_n.
$$
Similarly, we also have
$$
P\big(\cup_{l=1}^qB_{l_t}\big)\geq P\big(\cup_{l=1}^q[\{V_l>y_n\}\big)P\big(\{b_n(\sum_{l=1}^q V_l-a_n)\leq z\}]\big)-C \Upsilon_n.
$$
Thus, we obtain that
$$
\sup_{-2\log \log p^{\theta}<y<2\log p,z}\Big| P\big(\cup_{l=1}^qB_{l_t}\big)-P\big(\cup_{l=1}^q[\{V_l>y_n\}\big)P\big(\{b_n(\sum_{l=1}^q V_l-a_n)\leq z\}]\big)\Big|\leq C\Upsilon_n.
$$
Combine the above result with (\ref{qusil}), we obtain the desired uniform convergence rate in Theorem 1. Now it only remains to prove the claim (\ref{ken0}). Denote the number of distinct variables with subindex appear in $I$ as L, while ${L\leq 2d}.$  Without loss of generality, We consider the following case. We can bound
    \begin{eqnarray}\lbl{ken}
&& P\big(b_n \sum_{(i_l, j_l)\in W_{I}} (V_l-1)\geq \Upsilon_n\big)\\
&\leq& \sum_{i=1}^{L}P\Big(\sum_{m=i+1}^{p}\Big\{\frac{(\sum_{k=1}^nx_{ki}x_{km})^2}{n}-1\Big\}\geq \frac{ \Upsilon_n}{b_nL}\Big)\nonumber\\
&\leq&  \sum_{i=1}^{L} EP^{i}\Big(\sum_{m=i+1}^{p}\Big\{\frac{(\sum_{k=1}^nx_{ki}x_{km})^2}{n}-\frac{\sum_{k=1}^nx_{ki}^2}{n}\Big\}\geq  \frac{ \Upsilon_n}{2b_nL}\Big)I_{T_n^i} \nonumber \\
&&+CLe^{-n^{1/3}/C},\nonumber
\end{eqnarray}
where $T_n^i=\{|\frac{\sum_{k=1}^nx_{ki}^2}{n}-1|\leq\epsilon n^{-1/3}, |\frac{\sum_{k=1}^nx_{ki}^4}{n}-Ex_{ki}^4|\leq\epsilon n^{-1/3}, \max_{1\leq k\leq n}|x_{ki}|\leq n^{1/6}\}
 $ and $P((T_n^i)^{c})\leq Ce^{-n^{1/3}/C}$. The last inequality is due to that $  \frac{ \Upsilon_n}{b_nL}\gg pn^{-1/3}$.
 Set $h_n=n^{1/3}$ and $\mu_n=E^1 \big(\frac{\sum_{k=1}^nx_{k1}x_{k2}}{\sqrt{n}} \big)^2I(\frac{(\sum_{k=1}^nx_{k1}x_{k2})^2}{n}  \leq h_n).$ 
 Define
    \begin{eqnarray*}
 & &  y_{m}=\big(\frac{\sum_{k=1}^nx_{k1}x_{km}}{\sqrt{n}} \big)^2I(\frac{(\sum_{k=1}^nx_{k1}x_{km})^2}{n}  \leq h_n)  -\mu_n  \nonumber\\
 & & z_{m}=\big(\frac{\sum_{k=1}^nx_{k1}x_{km}}{\sqrt{n}} \big)^2I(\frac{(\sum_{k=1}^nx_{k1}x_{km})^2}{n}> h_n)+\mu_n -\frac{\sum_{k=1}^nx_{k1}^2}{n}\lbl{piano}
 \end{eqnarray*}
 for all $ m\geq 1.$ Use the inequality $P(U+V \geq u+v)\leq P(U\geq u) + P(V\geq v)$ to obtain
 \begin{eqnarray}\lbl{ken1}
 & & P^{1}\Big(\sum_{m=2}^{p}\Big\{\frac{(\sum_{k=1}^nx_{k1}x_{km})^2}{n}-\frac{\sum_{k=1}^nx_{k1}^2}{n}\Big\}\geq  \frac{\Upsilon_n}{2 b_nL}\Big)I_{T_n^1}\nonumber\\
  & \leq & P^{1}\Big(\sum_{m=2}^{p} y_{m}\geq \frac{ \Upsilon_n}{4b_nL}\Big)I_{T_n^1} +
  P^{1}\Big(\sum_{m=2}^{p}z_m\geq  \frac{\Upsilon_n}{4b_nL}\Big)I_{T_n^1}\nonumber\\
 & := & A_n + B_n
 \end{eqnarray}
 for any $n\geq 1$. To bound $A_n$ and $B_n$, we only show the case when $p\gg n^{5/3}$ ,  which indicates $\frac{ \Upsilon_n}{4b_n L}\geq \frac{p}{4L}.$ The other case when $p\le n^{5/3}$  is similar. First we bound $A_n$.
 \begin{eqnarray}\lbl{ken2}
 A_n
 &\leq & 4\cdot\exp\Big\{-\frac{p^2/32C^2L^2}{ \frac{(p-1)}{n^2}(\sum\limits_{k=1}^nx_{k1}^4\min\limits_{1 < i\leq p}Ex_{ki}^4+\sum\limits_{1\leq k\neq l\leq n}x_{k1}^2x_{l1}^2) +  \frac{h_np}{12L}} \Big\}I_{T_n^1}u\nonumber\\
 & \leq & 4\cdot\exp\Big\{-\frac{p }{3Ch_nL}\Big\}.
 \end{eqnarray}
Define $b_2=x_{k2}I(|x_{k2}|\leq n^{1/6})-Ex_{k2}I(|x_{k2}|\leq n^{1/6})$ and $b_3=x_{k2}I(|x_{k2}|>n^{1/6})-Ex_{k2}I(|x_{k2}|> n^{1/6})$.
 By Bernstein inequality, the bound of $B_n$,
  \begin{eqnarray}\lbl{kenp}
 &&  P^1\big (\frac{(\sum_{k=1}^nx_{k1}x_{k2})^2}{n}  \geq h_n\big)I_{T_n^1}\nonumber \\&\leq& 2 P^1\big(\sum_{k=1}^n x_{k1}b_2  \geq n^{2/3}/2\big)I_{T_n^1}+2P^1\big(\sum_{k=1}^n x_{k1}b_3  \geq n^{2/3}/2\big)I_{T_n^1}\nonumber \\&\leq& 2\exp\big\{-\frac{n^{4/3}}{8 \sum_{k=1}^{n}x_{k1}^2+8/3 \max{|x_{k1}|}n^{5/6}}\big\}I_{T_n^1}\nonumber\\&& +
  2P(\max_{1\leq k\leq n}|x_{k2}|>n^{1/6})\leq Ce^{-n^{1/3}/C}.
   \end{eqnarray}
    Since   \begin{eqnarray*}
    &&|\mu_n-\frac{\sum_{k=1}^nx_{k1}^2 }{n}|I_{T_n^1}\\
    &\leq &C\frac{\sum_{k=1}^nx_{k1}^2+\sum_{k_1\neq k_2}x_{k_11}x_{k_21}}{n} I_{T_n^1}[P^1\big (\frac{(\sum_{k=1}^nx_{k1}x_{k2})^2}{n}\geq h_n\big) I_{T_n^1}]^{1/2}\\&\leq& Ce^{-n^{1/3}/C}, \end{eqnarray*} then $B_n\leq pP^1\big (\frac{(\sum_{k=1}^nx_{k1}x_{k2})^2}{n}  \geq h_n\big)I_{T_n^1}.$
   Combine (\ref{ken}), (\ref{ken2}) and (\ref{kenp}), we show the claim.

Therefore, The proof of Theorem 1 is complete.
\end{proof}

\subsection{Proof of Theorem 2}

\begin{proof}[Proof of Theorem 2]
This is a direct result by combining Theorem \ref{birthday} and Lemmas \ref{lawSn}--\ref{lawLn}.
\end{proof}

\subsection{Proof of Theorem 3}

The follow  Lemma \ref{xue2} facilitates the proof of Theorem 3, and the  proof of Lemma \ref{xue2} is given in \cite{lixue-2015}.
\begin{lemma}\lbl{xue2}   Assume $n,p\to \infty$, $d=O(\log p) $ and $\log p=o(n^{1/6})$ as $n\to\infty.$ We arrange the tow dimensional indices $\{(i,j): 1\leq i<j\leq p\}$ in any ordering and set them as $\{(i_l, j_l): 1\leq l\leq q={p \choose 2}\}.$ Let $\delta_l=(r_{i_lj_l})^2/n $.    \beaa
 \sum_{1\leq l_1<\ldots< l_d\leq q}  P(\cap_{t=1}^d \{\delta_{l_t}\geq 4\log p - \log\log p + y\})\leq(\frac{e^{-y/2+3}}{d})^d.\eeaa
 \end{lemma}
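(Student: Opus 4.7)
The plan is to parallel the proof of Lemma~\ref{xia3}, which gives an identical factorial-moment bound for $V_l=(X_{i_l}'X_{j_l})^2/n$, replacing the product-of-columns statistic by the rank-based $\delta_l$. The two key ingredients are a sharp moderate-deviation bound for a single $\delta_l$ and a joint-probability bound that exploits the independence of the random permutations associated with distinct columns of $\mathcal{X}$.

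First I would establish the single-event estimate
$$
P\!\left(\delta_l \geq 4\log p - \log\log p + y\right) \;\leq\; \frac{C}{p^2}\,e^{-y/2}
$$
uniformly in the relevant range $-2\log\log p^{\theta}\leq y \leq 2\log p$. This is essentially what the proof of Lemma~\ref{lawMn} already produces for a single Spearman rank-correlation: one replaces $r_{i_l j_l}$ with a truncated sum of independent bounded summands using a H\'ajek-type projection argument, then invokes the Sakhanenko large-deviation bound (Lemma~\ref{shao}) on the moderate-deviation scale $\sqrt{4\log p-\log\log p+y}$. The hypothesis $\log p=o(n^{1/6})$ places this scale within the valid range of that bound, and the tail estimate $2(1-\Phi(\sqrt{4\log p-\log\log p+y}))\asymp p^{-2}e^{-y/2}$ gives the stated upper bound.

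Next I would exploit the structural fact that, under the continuous-independent entry assumption, the normalized rank vectors $N_{\cdot 1},\ldots,N_{\cdot p}$ are mutually independent, each a uniform random permutation of the same deterministic set. It follows that whenever the pairs $(i_{l_1},j_{l_1}),\ldots,(i_{l_d},j_{l_d})$ are vertex-disjoint, i.e.\ form a matching in $K_p$, the statistics $r_{i_{l_1}j_{l_1}},\ldots,r_{i_{l_d}j_{l_d}}$ are mutually independent and the intersection probability factorises, giving
$$
P\Bigl(\bigcap_{t=1}^{d}\{\delta_{l_t}\geq 4\log p-\log\log p+y\}\Bigr) \;\leq\; \left(\frac{Ce^{-y/2}}{p^2}\right)^{d}.
$$
For non-matching configurations I would classify the $d$-tuples by the multigraph $G$ they induce on the column indices; if $G$ has $v(G)=2d-k$ distinct vertices, then the number of such tuples is of order at most $p^{2d-k}$, while a peeling argument (iteratively removing degree-one vertices, whose associated $r_{ij}$ is independent of everything else by the permutation-independence above) combined with a crude worst-case bound on the residual "core" shows the joint probability is at most of order $(e^{-y/2}/p^{2})^{d-O(k)}$. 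Thus each non-matching class contributes a factor of order $p^{-\Omega(k)}$ relative to the matching term and is negligible.

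Finally, summing the dominant matching contribution, and using $\binom{q}{d}\leq (eq/d)^{d}$ with $q=\binom{p}{2}\leq p^{2}/2$, yields
$$
\binom{q}{d}\!\left(\frac{Ce^{-y/2}}{p^2}\right)^{\!d} \;\leq\; \left(\frac{e\,q\,Ce^{-y/2}}{d\,p^{2}}\right)^{\!d} \;\leq\; \left(\frac{e^{-y/2+3}}{d}\right)^{\!d},
$$
with the constant $e^{3}$ arising from absorbing $eC/2$ into $e^{3}$ by choosing the truncation and moderate-deviation constants appropriately. The main obstacle I anticipate is the combinatorial bookkeeping in the non-matching case: for graphs containing cycles the leaf-peeling argument stops before exhausting all edges, and one must carefully trade off the worst-case single-event bound applied to the residual core against the (smaller) number of such configurations to confirm that matchings alone dictate the leading order. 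Once this accounting is in place, the lemma follows.
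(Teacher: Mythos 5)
The paper itself does not prove Lemma~\ref{xue2}; like Lemma~\ref{xia3} it is deferred to the technical report \citep{lixue-2015}, so your outline can only be judged against the surrounding machinery (Lemma~\ref{lawMn}, the claim \eqref{aircrash}, and the role the lemma plays in the Bonferroni argument). Your overall strategy --- a single-edge moderate-deviation estimate of order $p^{-2}e^{-y/2}$, factorization over vertex-disjoint edges, a graph classification of the collided tuples, and the elementary bound $\binom{q}{d}\le (eq/d)^d$ --- is indeed the standard route and is surely close to what the technical report does. Two steps, however, have genuine gaps.

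First, the single-event estimate. You propose a H\'ajek projection followed by Sakhanenko's bound (Lemma~\ref{shao}). But Lemma~\ref{shao} requires \emph{independent} summands, and conditionally on $N_{\cdot j}$ the terms $N_{ki}N_{kj}$, $1\le k\le n$, are a linear \emph{rank} statistic: the $N_{ki}$ are a random permutation of fixed scores and are not independent. The paper is explicit that this is where the rank-based case diverges from Theorem~\ref{birthday} ("we ought to use different techniques such as martingales"), and in the proof of Theorem~\ref{birthdaycake} it invokes "the large deviation for generalized rank statistics," i.e.\ the Cram\'er-type result of Seoh, Ralescu and Puri (1985), which is in the bibliography precisely for this purpose. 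Replacing $r_{ij}$ by its projection and then applying Sakhanenko is not free: the projection error must be controlled at the scale $e^{-2\log p}$ in the tail, and you give no argument for that. You should instead apply the rank-statistic large deviation directly (conditioning on one rank vector, under which the conditional law is permutation-invariant and hence equals the unconditional law).

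Second, the 2-core. Your leaf-peeling step is fine in substance (though the leaf edge is only \emph{conditionally} independent of the rest given the rank vector of its non-leaf endpoint; it is the exact permutation invariance of the conditional law that rescues the factorization, and that should be said). But the "crude worst-case bound on the residual core" does not obviously close the argument. For a triangle, bounding its joint probability by a maximal matching (two edges) gives $\asymp e^{-y}p^{-4}$ against $\asymp p^{3}$ configurations, i.e.\ a contribution $\asymp e^{-y}p^{-1}$ per core, which relative to the three-edge matching term $\asymp e^{-3y/2}$ is $e^{y/2}/p$ --- only $O(1)$ at the top of the range $y\le 2\log p$, not $o(1)$. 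So matchings alone do not "dictate the leading order"; you either need the near-product bound $(Cp^{-2+\epsilon})^{\#\text{edges}}$ for \emph{all} edges of each core component (the analogue of Cai and Jiang's Lemmas 6.7 and 6.9, proved by iterated conditioning on the scores at the core vertices restricted to a good event), or a careful count showing the borderline classes are few enough to be absorbed into the slack between your constant $eC/2$ and $e^{3}$. As written, this accounting is the missing piece of the proof.
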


Now we are ready to prove Theorem 3.

\begin{proof}[Proof of Theorem \ref{birthdaycake}] ${The}$ proof is similar to  Theorem \ref{birthday}. We only need to replace Lemma \ref{xia3} with Lemma \ref{xue2} and also prove the following claim
 \begin{eqnarray}\lbl{aircrash}
 P\big(\beta_n \big|\sum_{(i_l, j_l)\in W_{I}} (\delta_l-1-\frac{1}{n-1})\big|\geq \Gamma_n\big)&\leq&    Ce^{-n^{1/3}/C}
\end{eqnarray}
where $\delta_l=nr_{ij}^2$, $ \Gamma_n=\min(n^2,p)^{-1/5}$ and $W_I$ is defined as in the proof of Theorem \ref{birthday}.
Thus, it remains to prove the claim (\ref{aircrash}).
 Denote the number of  distinct vectors with subindex in $I$ as L. Then,
    \begin{eqnarray}\lbl{qusi}
&& P\big(\beta_n \sum_{(i_l, j_l)\in W_{I}} (\delta_l-1-\frac{1}{n-1})\geq \Gamma_n\big)\nonumber\\ &\leq& \sum_{i=1}^{L}P\Big(\sum_{m=i+1}^{p}\Big\{\frac{(\sum_{k=1}^nN_{ki}N_{km})^2}{n}-1-\frac{1}{n-1}\Big\}\geq \frac{\Gamma_n}{\beta_nL}\Big)\nonumber
 \\&=&  \sum_{i=1}^{L} EP^{i}\Big(\sum_{m=i+1}^{p}\Big\{\frac{(N_i'N_m)^2}{n}- 1-\frac{1}{n-1} \Big\}\geq  \frac{\Gamma_n}{ \beta_nL}\Big).
\end{eqnarray}

  Set $h_n=n^{1/3}$. 
 Define
    \begin{eqnarray*}
    &&\mu_n=E^1 \big(\frac{\sum_{k=1}^nN_{k1}N_{k2}}{\sqrt{n}} \big)^2I(\frac{(\sum_{k=1}^nN_{k1}N_{k2})^2}{n}  \leq h_n),\nonumber\\
 & &  y_{m}=\big(\frac{\sum_{k=1}^nN_{k1}N_{km}}{\sqrt{n}} \big)^2I(\frac{(\sum_{k=1}^nN_{k1}N_{km})^2}{n}  \leq h_n)  -\mu_n,  \nonumber\\
 & & z_{m}=\big(\frac{\sum_{k=1}^nN_{k1}N_{km}}{\sqrt{n}} \big)^2I(\frac{(\sum_{k=1}^nN_{k1}N_{km})^2}{n}> h_n)+\mu_n - 1-\frac{1}{n-1}
 \end{eqnarray*}
 for all $ m\geq 1.$ Use the inequality $P(U+V \geq u+v)\leq P(U\geq u) + P(V\geq v)$ to obtain
 \begin{eqnarray}
 & & P^{1}\Big(\sum_{m=2}^{p}\Big\{\frac{(\sum_{k=1}^nN_{k1}N_{km})^2}{n}-\frac{N_1'RN_1}{n}\Big\}\geq  \frac{\Gamma_n}{\beta_nL}\Big) \nonumber\\
  & \leq & P^{1}\Big(\sum_{m=2}^{p} y_{m}\geq \frac{\Gamma_n}{2\beta_nL}\Big)  +
  P^{1}\Big(\sum_{m=2}^{p}z_m\geq  \frac{\Gamma_n}{2\beta_nL}\Big) \nonumber\\
 & := & A_n + B_n
 \end{eqnarray}
 for any $n\geq 1$. First we bound $A_n$.
     Since   \begin{eqnarray*} &&E^1y_m^2\\&\leq& E^1\big(\frac{\sum_{k=1}^nN_{k1}N_{km}}{\sqrt{n}} \big)^4 \\
    &=&\frac{1}{n^2}\big[EN_{11}^4
   \sum_{i=1}^n N_{i1}^4 +\sum_{i\neq j}^{n}\{3(EN_{11}^2N_{21}^2)N_{i1}^2N_{j_1}^2 +3(EN_{11}^3N_{21})N_{i1}^3N_{j1}\}\\
 &&+\sum_{i\neq j\neq s}\{6(EN_{11}^2N_{21}N_{31})N_{i1}^2N_{j1}N_{s1} \} +
\sum_{i\neq j\neq s\neq t}(EN_{11}N_{21}N_{31}N_{41})N_{i1}N_{j1}N_{s1}N_{t1}\big]\\&\leq C_0&
     \end{eqnarray*} then by Bernstein inequality,
 \begin{eqnarray*}
 A_n &\leq & 4\cdot\exp\Big\{-\frac{\Gamma_n^2/8\beta_n^2L^2}{ (p-1) C_0 +  h_np/12L} \Big\}
  \leq  4\cdot\exp\Big\{-\frac{n^{1/3} }{C}\Big\}.
 \end{eqnarray*}
    By the large deviation for generalized rank statistics,   the bound of $B_n$,
  \begin{eqnarray}\lbl{ken3}
 && EB_n\leq EP^1\big (\frac{(\sum_{k=1}^nN_{1k}N_{2k})^2}{n}  \geq h_n\big) \leq Ce^{-n^{1/3}/C}.
   \end{eqnarray}

Therefore, the proof of Theorem 3 is complete.
   \end{proof}

\subsection{Proof of Theorem 4}

\begin{proof}[Proof of Theorem 4]
This is a direct result by combining Theorem \ref{birthdaycake} and Lemmas \ref{lawTn}--\ref{lawMn}.
\end{proof}

\subsection{Proof of Theorem 5}

\begin{proof}[Proof of Theorem 5]
This is a direct result by using Theorem \ref{prasing} and Theorem \ref{rasing}.
\end{proof}

\subsection{Proof of Theorem 6}

\begin{proof} [Proof of Theorem \ref{tb1}]
Proof is divided into two parts. In the first part, we show that $\frac{1}{B_n}(S_n-A_n)\to N(0,1)$,
where $A_n=\frac{n^2+1}{2n^2}[\text{tr}(\Sigma^2)-p]+\frac{p(p-1)}{2n}$ and
\begin{eqnarray*}
  B_n^2&=&\frac{1}{n^3}(2(n-1)^2\text{tr}(\Sigma^4)+4(n-1)(p-n)\text{tr}(\Sigma^3)+2(p-n)^2\text{tr}(\Sigma^2))\\
  &&+\frac{n-1}{n^3}(\text{tr}(\Sigma^2)^2+\text{tr}(\Sigma^4)-4\sum_{i,j,s}\sigma_{ij}^2\sigma_{is}^2+4\sum_{i,j}\sigma_{ij}^4). \end{eqnarray*}
To this end, we decompose $S_n-A_n$ as follows:
$$
 S_n-A_n=\sum_{m=2}^n\sum_{l=1}^{m-1}H_n( \mathbf{x}_{ m}, \mathbf{x}_{ l})+\sum_{m=1}^n (Y_m+U_m)+R_1,
$$
where $H_n( \mathbf{x} _{ m},  \mathbf{x}_{ l})=\frac{1}{n^2}\sum_{i\neq j}(x_{mi}x_{mj}-\sigma_{ij})(x_{li}x_{lj}-\sigma_{ij})$, $Y_m=\frac{n-1}{n^2}\sum_{i\neq j}\sigma_{ij}(x_{mi}x_{mj}-\sigma_{ij})$, $U_m=\frac{1}{n^2}\sum_{i\neq j}\sigma_{ii}(x_{mj}^2-\sigma_{jj})$, and $R_1=\frac{1}{n^2}\sum_{m=1}^n\sum_{i<j}\{(x_{mi}^2-\sigma_{ii})(x_{mj}^2-\sigma_{jj})-2\sigma_{ij}^2\}$. We define $G_1=Y_1+U_1$ and $G_m=\sum_{l=1}^{m-1}H_n(\mathbf{x}_{ m}, \mathbf{x}_{ l})+Y_m+U_m$ for $m\geq 2$. Let $W_k=\sum_{m=1}^kG_m$  and $\mathcal{F}_{k}$ denote the $\sigma$-field generated by $( \mathbf{x}_1, \ldots,  \mathbf{x}_k)$ for $k\geq 1$. Then $\{W_{m}, \mathcal{F}_m, m=1, 2, \ldots \}$ is a martingale. Now, we have $S_n-A_n=W_n+R_1$, where $R_1$ is the higher order residual. It is not difficult to show that $ EW_n^2=B_n^2$. Thus, the desired result can be obtained by applying the martingale central limit theorem \citep{haeusler-1988} to $W_n$. To this end, we only need to show that
 \begin{eqnarray}\lbl{cat}
B_{n}^{-2}\sum_{m=1}^{n} EG_m^2I(|G_m|>\epsilon B_{n})\to 0
    \end{eqnarray}
    as $n\to \infty$ for each $\epsilon>0,$ and also that
       \begin{eqnarray}\lbl{dog}
       B_{n}^{-2} \sum_{m=1}^nE_{m-1}G_{m}^2\to 1\,\,\,\,\text{ in probability}.
        \end{eqnarray}
We first prove that \eqref{cat} is satisfied. Note that  \begin{eqnarray*}          &&\sum_{m=1}^nEG_{m}^4\\&\leq& 27\sum_{m=1}^n \{\sum_{l=1}^{m-1}E H_n^4( \mathbf{x}_{ m}, \mathbf{x}_{ l})+3\sum_{l_1\neq l_2}H_n^2(\mathbf{x}_{ m}, \mathbf{x}_{ l_1})H_n^2( \mathbf{x}_{ m}, \mathbf{x}_{ l_2})+EY_m^4+EU_m^4\}\\&\leq& 27(n^3+n^2)EH_n^4( \mathbf{x}_{ 1},  \mathbf{x}_{ 2})+27nEY_1^4+27nEU_1^4.
        \end{eqnarray*}
By using Lemma 10 of \cite{lizou-2014}, we already know that $EH_n^4(\mathbf{x}_{ 1}, \mathbf{x}_{ 2})\leq \frac{C}{n^8}\text{tr}(\Sigma^2)^4$, $EY_1^4\leq \frac{C}{n^4}\{\text{tr}(\Sigma^4)^2+\text{tr}(\Sigma^8)+\text{tr}(\Sigma^6)+\text{tr}(\Sigma^2)\text{tr}(\Sigma^4)+\text{tr}^2(\Sigma^2)+\text{tr}(\Sigma^4)\}\leq  \frac{2C}{n^4}\text{tr}(\Sigma^4)^2
       $ and $EU_1^4\leq\frac{C}{n^8}p^4 \text{tr}(\Sigma^2)^2.$
Combining these results, we obtain (\ref{cat}).

Next we prove \eqref{dog}. We define $D_m=E_{m-1}G_m^2:=D_{m1}+D_{m2}+D_{m3}+D_{m4}$, where
        \begin{eqnarray*}
        D_{m1}&=&\frac{1}{n^4}\sum_{l=1}^{m-1}\sum_{i\neq j, s\neq t}(\sigma_{is}\sigma_{jt}+\sigma_{it}\sigma_{js})(x_{li}x_{lj}-\sigma_{ij})(x_{fs}x_{ft}-\sigma_{st}),\\
          D_{m2}&=&\frac{1}{n^4}\sum_{l\neq f}^{m-1}\sum_{i\neq j, s\neq t}(\sigma_{is}\sigma_{jt}+\sigma_{it}\sigma_{js})(x_{li}x_{lj}-\sigma_{ij})(x_{fs}x_{ft}-\sigma_{st}),\\
                  D_{m3}&=&\frac{n-1}{n^4}\sum_{l=1}^{m-1}\sum_{i\neq j, s\neq t}\sigma_{s t} (\sigma_{is}\sigma_{jt}+\sigma_{it}\sigma_{js})(x_{li}x_{lj}-\sigma_{ij}),\\
                  D_{m4}&=&\frac{1}{n^4}\sum_{l=1}^{m-1}\sum_{i\neq j, s\neq t} (\sigma_{ss}\sigma_{it}\sigma_{jt}+\sigma_{tt}\sigma_{is}\sigma_{js})(x_{li}x_{lj}-\sigma_{ij}).
                        \end{eqnarray*}
By simple calculation, it is easy to see that $\text{Var}(\sum_{m=1}^n D_m)= \sum_{m=1}^n(2n-2m+1)\text{Var}(D_m).$
Then we prove (\ref{dog}) by noting that
\beaa
&&\text{Var}(\sum_{m=1}^n D_m)\\&\leq& Cn^3\text{Var}(D_{m1})+Cn^4\text{Var}(D_{m2})+Cn^2\text{Var}(D_{m3})+Cn^2\text{Var}(D_{m4})\\
&\leq& C \frac{\text{tr}^4(\Sigma^2)}{n^5}+C\frac{\text{tr}^2(\Sigma^4)}{n^5}
+C\frac{\text{tr}(\Sigma^8)}{n^3}+C\frac{\text{tr}(\Sigma^8)}{n^5}+C\frac{p^2\text{tr}(\Sigma^6)+p^2\text{tr}^2(\Sigma^2)}{n^5}.
\eeaa

In the second part, we prove the power consistency of $TS_n^1$ against $\mathbf{H}_1:\Sigma\in \mathcal{G}_1\cup \mathcal{G}_2 $, i.e.,  $\inf_{\Sigma\in \mathcal{G}_1\cup \mathcal{G}_2 }P (TS_n^1=1)\to 1, \ \text{as} \ n\rightarrow \infty.$  Recall that the threshold $c_\alpha$ is the $\alpha$ upper quantile of $\Phi\star F$. To simplify notation, we let $SL_n=b_n(S_n-a_n)+(n L_{n}^2 - 4\log p + \log\log p)\geq c_\alpha$. It is obvious that
\beaa
\inf_{\Sigma\in \mathcal{G}_1\cup \mathcal{G}_2 }P (TS_n^1=1)&=&\inf_{\Sigma\in \mathcal{G}_1\cup \mathcal{G}_2 }P (SL_n\geq c_\alpha)\\
&\ge&\min(\inf _{\Sigma\in  \mathcal{G}_1 } P (SL_n>c_{\alpha}),\inf _{\Sigma\in  \mathcal{G}_2 }P (SL_n>c_{\alpha})).
\eeaa
On the one hand, we have the simple probability bound that
\beaa \inf_{\Sigma\in  \mathcal{G}_1 } P (SL_n\ge c_{\alpha}) &\geq&   \inf_{\Sigma\in  \mathcal{G}_1 }P(nL_n-4\log p+\log\log p\geq \frac{1}{2}\log p+c_{\alpha})\\&&-  \sup_{\Sigma\in  \mathcal{G}_1 }P( b_n(nS_n-a_n)\leq- \frac{1}{2}\log p). \eeaa
In what follows, we shall show that $\inf_{\Sigma\in  \mathcal{G}_1 }P (nL_n-4\log p+\log\log p\geq \frac{1}{2}\log p+c_{\alpha})\rightarrow 1$ and $\inf_{\Sigma\in  \mathcal{G}_1 } P ( b_n(nS_n-a_n)\leq- \frac{1}{2}\log p)\rightarrow 0$ as $n$ diverges to infinity. It is not difficult to show that $n^2b_n^2=O(\frac{n^3}{np^2+p^3})$  and $A_n-a_n/n=\frac{n^2+1}{2n^2}tr(\Sigma-I)^2.$ When $n $ is large enough, we can further show that $B_n^2\leq \frac{4(n-1)^2tr(\Sigma^2)tr(\Sigma-I)^2+4(p-1)^2tr(\Sigma^2)+4(n-1)[tr(\Sigma^2)]^2}{n^3}$ and also that $B_n^2\geq \frac{2(n-1)^2tr(\Sigma^2-\Sigma)^2+2(p-1)^2tr(\Sigma^2)+ (n-1)[tr(\Sigma^2)]^2}{n^3}$. Then, we have
 \begin{eqnarray*} &&P( b_n(nS_n-a_n)\leq- \frac{1}{2}\log p)\\&\leq&
 P(\frac{1}{B_n}(S_n-A_n)\leq- \frac{\log p}{2nb_nB_n}-\frac{A_n-a_n/n}{B_n} )
\\ &\to& \Phi(- \frac{\log p}{2nb_nB_n}-\frac{A_n-a_n/n}{B_n}). \end{eqnarray*}
Hence, $\sup_{\Sigma\in  \mathcal{G}_1 }P ( b_n(nS_n-a_n)\leq- \frac{1}{2}\log p)\rightarrow 0$ holds since $ \frac{\log p}{2nb_nB_n}+\frac{A_n-a_n/n}{B_n}\to \infty$ as $n\to \infty$, for all ${\Sigma\in  \mathcal{G}_1 }$. In the meantime, we also have
 \begin{eqnarray*}
   &&\inf_{\Sigma\in  \mathcal{G}_1 }P (nL_n-4\log p+\log\log p\geq \frac{1}{2}\log p+c_{\alpha})\\ && \geq   \inf_{\Sigma\in  \mathcal{G}_1 }P(\max|\sigma_{ij}|-\max_{ij}|\hat{\sigma}_{ij}-\sigma_{ij}|\geq \frac{9}{2}\log p-\log\log p)\\&& \geq 1-\sup_{\Sigma\in  \mathcal{G}_1 }p(\max_{ij}|\hat{\sigma}_{ij}-\sigma_{ij}|\geq( C-\frac{9}{2})\sqrt{\log p/n}).
 \end{eqnarray*}
Thus,  $\inf_{\Sigma\in  \mathcal{G}_1 }P (nL_n-4\log p+\log\log p\geq \frac{1}{2}\log p+c_{\alpha})\to 1$, when $p=O(n^3)$ and  $n\to \infty.$ We immediately obtain that $\inf_{\Sigma\in  \mathcal{G}_1 } P (SL_n\ge c_{\alpha})\to 1$.

On the other hand, we use the simple probability bound again to obtain that
\beaa
\inf_{\Sigma\in  \mathcal{G}_2 }P(SL_n>c_{\alpha}) &\geq&     \inf_{\Sigma\in  \mathcal{G}_2 }P( b_n(nS_n-a_n)\geq 4\log p+c_{\alpha})\\&&-\sup_{\Sigma\in  \mathcal{G}_2 }P(nL_n-4\log p+\log\log p\leq -4\log p).
\eeaa
It is obvious that $P(nL_n-4\log p+\log\log p\leq -4\log p)=0$. Moreover, as $n\to \infty$,
$$
  \inf_{\Sigma\in  \mathcal{G}_2 }P( b_n(nS_n-a_n)\geq 4\log p+c_{\alpha})\to1-\Phi(4 \frac{\log p}{2nb_nB_n}-\frac{A_n-a_n/n}{B_n})    \to 1,
$$
Thus, we obtain that $\inf_{\Sigma\in  \mathcal{G}_2 }P(SL_n\ge c_{\alpha})\to 1$.

Therefore, the proof of Theorem 6 is complete.
\end{proof}

\end{document}